\documentclass[11pt,reqno,twoside]{amsart}

\usepackage[T1]{fontenc}
\usepackage{lmodern}
\usepackage{amsmath,mathdots,amssymb,latexsym,amsthm,mathrsfs}
\usepackage{dsfont}
\usepackage{xfrac}
\usepackage{breqn}

\usepackage{graphicx}
\usepackage{epsfig}
\usepackage{float}
\usepackage{subfigure}
\usepackage{multirow}
\usepackage{environ}
\usepackage{pifont}
\usepackage{xcolor}
\usepackage{soul}
\usepackage{cite}
\usepackage{microtype}

\usepackage[
  a4paper,
  textwidth=6.7in,
  textheight=9.0in,
  centering
]{geometry}

\renewcommand{\baselinestretch}{1.0}

\allowdisplaybreaks

\AtBeginDocument{%
  \setlength{\abovedisplayskip}{5pt plus 2pt minus 2pt}%
  \setlength{\belowdisplayskip}{5pt plus 2pt minus 2pt}%
  \setlength{\abovedisplayshortskip}{3pt plus 1pt}%
  \setlength{\belowdisplayshortskip}{5pt plus 1pt minus 1pt}%
  \setlength{\jot}{3pt}%
}

\NewEnviron{myequation}{%
  \begin{equation}
    \scalebox{1.1}{$\BODY$}
  \end{equation}
}

\makeatletter

\newcommand{\single}{%
  \let\CS=\@currsize
  \renewcommand{\baselinestretch}{1.1}\tiny\CS
}

\newcommand{\singb}{%
  \let\CS=\@currsize
  \renewcommand{\baselinestretch}{1}\tiny\CS
}

\newcommand{\singa}{%
  \let\CS=\@currsize
  \renewcommand{\baselinestretch}{1.2}\tiny\CS
}

\newcommand{\oneandahalfspacing}{%
  \let\CS=\@currsize
  \renewcommand{\baselinestretch}{1.5}\tiny\CS
}

\newcommand{\singlespacing}{%
  \let\CS=\@currsize
  \renewcommand{\baselinestretch}{1.6}\large\CS
}

\makeatother

\newcommand{\bc}{\begin{center}}
\newcommand{\ec}{\end{center}}

\newcommand{\be}{\begin{eqnarray}}
\newcommand{\ee}{\end{eqnarray}}

\newcommand{\beano}{\begin{eqnarray*}}
\newcommand{\eeano}{\end{eqnarray*}}

\newcommand{\ba}{\begin{array}}
\newcommand{\ea}{\end{array}}

\newcommand{\0}{{_{_{0}}}}

\newcommand{\Hom}{\operatorname{Hom}}
\newcommand{\End}{\operatorname{End}}

\newcommand{\diag}{\operatorname{diag}}

\newcommand{\Ind}{\operatorname{Ind}}

\theoremstyle{plain}

\newtheorem{theorem}{Theorem}[section]
\newtheorem{corollary}[theorem]{Corollary}
\newtheorem{lemma}[theorem]{Lemma}
\newtheorem{prop}[theorem]{Proposition}

\newtheorem{conjecture}[theorem]{Conjecture}
\newtheorem{definition}{Definition}[section]

\theoremstyle{remark}

\numberwithin{equation}{section}

\DeclareMathOperator{\GL}{GL}

\DeclareMathOperator{\G}{G}

\DeclareMathOperator{\F}{F}
\DeclareMathOperator{\N}{N}

\DeclareMathOperator{\V}{V}

\DeclareMathOperator{\B}{B}
\DeclareMathOperator{\To}{T}

\usepackage[final]{hyperref}

\hypersetup{
  colorlinks=true,
  citecolor=red,
  linkcolor=blue,
  urlcolor=blue,
  pdftitle={Parabolically induced representations of GL3 over finite local rings},
  pdfauthor={Prem Dagar},
  pdfkeywords={Parabolic induction; Finite local rings; Principal series representations; Multiplicity-free representations}
}

\begin{document}

\title[Parabolically induced representations of $\GL_n$]
{Parabolically induced representations of $\GL_n(\mathcal O_\ell)$}

\author{Prem Dagar}
\address{Department of Mathematics, Indian Institute of Science Education and Research Tirupati, Tirupati 517619, India}
\email{dagarprem5@gmail.com, prem@labs.iisertirupati.ac.in}

\subjclass[2020]{Primary 20G05; Secondary 20C15, 20G25, 20C33}

\keywords{Parabolic induction, finite local chain rings, irreducibility, Gelfand pairs, multiplicity-free representations}

\begin{abstract}
Let $\mathcal O$ be the ring of integers of a non-Archimedean local field with uniformizer $\varpi$, and let $\mathcal O_\ell=\mathcal O/\varpi^\ell\mathcal O$ for $\ell\geq 1$. In this article, we study representations of $\GL_n(\mathcal O_\ell)$ parabolically induced from characters of the Borel subgroup $\B_n(\mathcal O_\ell)$. For $n=3$, we establish a criterion for the irreducibility of these induced representations in terms of the relative conductors of the inducing characters. Motivated by this result, we formulate a conjecture characterizing the irreducibility of such representations for arbitrary $n$. We further show that when all inducing characters are equal, the resulting induced representation is not multiplicity-free, implying that the corresponding triple $(\GL_n(\mathcal O_\ell),\B_n(\mathcal O_\ell),\chi)$ is not a Gelfand pair. Finally, we study the irreducible decomposition of these representations for $\GL_3(\mathcal O_\ell)$ when two relative conductors attain their maximal values. Using the decomposition theory for $\GL_2(\mathcal O_\ell)$, we describe the irreducible constituents and determine their dimensions.
\end{abstract}

\maketitle

\section{Introduction}\label{sec:introduction}

Let $\mathcal O$ be the ring of integers of a non-Archimedean local field $\F$ with maximal ideal $(\varpi)$ and residue field $\mathbb F_q$. For $\ell\geq 1$, let $\mathcal O_\ell=\mathcal O/\varpi^\ell\mathcal O$, which is a finite local chain ring. For $n\geq 2$, write $\G_n=\GL_n(\mathcal O_\ell)$ and $\B_n=\To_n\N_n$, where $\B_n$ is the Borel subgroup of upper triangular matrices, $\To_n$ is the diagonal subgroup, and $\N_n$ is its unipotent radical. Given characters $\chi_1,\ldots,\chi_n$ of $\mathcal O_\ell^\times$, define a character $\chi$ of $\To_n$ by $\chi(\diag(a_1,\ldots,a_n))=\prod_{i=1}^n\chi_i(a_i)$ and extend it trivially to $\B_n$ across $\N_n$. We denote the corresponding induced representation by $\mathcal B(\chi_1,\ldots,\chi_n)=\Ind_{\B_n}^{\G_n}\chi$. In this article, we study these induced representations, focusing on their irreducibility criteria and the number of irreducible constituents in their decompositions.

The representation theory of general linear groups over finite local chain rings has been studied by several authors using congruence subgroups, conjugation orbits, and Clifford theory. Nagornyi~\cite{nag} constructed the irreducible representations of $\GL_2(\mathbb Z/p^\ell\mathbb Z)$ using orbit methods and Clifford theory. Shintani~\cite{shin} introduced the notion of regular representations for $\GL_n(\mathcal O_\ell)$, extending the study to higher rank. Hill~\cite{MR1231615,MR1334228} further developed this approach through conjugation orbits and Clifford theory. Later, Takase~\cite{MR3448171} examined certain aspects of these constructions, while Stasinski and Stevens~\cite{MR3743488} provided a construction of all regular representations of $\GL_n(\mathcal O_\ell)$. Apart from regular representations, those obtained through parabolic induction form another important class, whose irreducibility and decomposition over finite local chain rings remain open questions in general. In a series of articles, Crisp et. al.,~\cite{MR3995719,MR4304676,MR4704476} extend the notion of cusp forms to reductive groups over finite rings. However, the principal-series functor considered in~\cite{MR4304676} need not coincide with parabolic induction from the Borel subgroup. We therefore study the representations $\mathcal B(\chi_1,\ldots,\chi_n)$ directly through their intertwining spaces, with particular emphasis on their irreducibility and decomposition into irreducible constituents. Motivated by this study, we formulate the following conjecture concerning the irreducibility of these representations. We refer to $\S$\ref{pril} for the relevant notation and definitions.

\begin{conjecture}\label{conj:borel-irreducibility} Let $n\geq 2$ and $\ell\geq 2$, and let $\chi_1,\ldots,\chi_n$ be complex characters of $\mathcal O_\ell^\times$. Then the representation $\mathcal B(\chi_1,\ldots,\chi_n)$ is irreducible if and only if the restrictions of $\chi_1,\ldots,\chi_n$ to $1+\varpi^{\ell-1}\mathcal O_\ell$ are pairwise distinct. Equivalently, the conductor $c(\chi_i\chi_j^{-1})=\ell$ for all $1\leq i<j\leq n$. \end{conjecture}

The case $n=2$ of Conjecture~\ref{conj:borel-irreducibility} was proved by the author and Verma~\cite[Theorem~1.1]{premGL2}, who also studied the reducible case. The main result of the present article establishes Conjecture~\ref{conj:borel-irreducibility} for $n=3$, providing a complete characterization of the irreducibility of Borel-induced representations of $\GL_3(\mathcal O_\ell)$ in terms of the relative conductors of the inducing characters. Our proof is based on a direct analysis of the intertwining spaces over the finite local chain ring $\mathcal O_\ell$. Furthermore, the techniques developed here extend beyond the irreducibility problem and allow us to study the decomposition of a class of reducible parabolically induced representations into irreducible constituents. We mention that Campbell and Nevins~\cite{camp1,camp2} studied the restriction of principal series representations of $\GL_3(\F)$ to $\GL_3(\mathcal O)$, where the maximal relative conductor case also arises; see~\cite[Sections~2.1 and~4]{camp2}. In contrast to their restriction-theoretic setting, our approach works directly with the finite group $\GL_3(\mathcal O_\ell)$ and yields both an irreducibility criterion and decomposition results for Borel-induced representations. Extending this analysis to higher rank presents further challenges, as the double-coset structure becomes considerably more intricate for $n>3$ due to the presence of additional congruence parameters within the residue-field Bruhat cells.

The irreducibility criterion naturally leads to a further question concerning the multiplicities of irreducible constituents in the reducible cases. Although the coincidence of two inducing characters guarantees reducibility, the resulting representation may still be multiplicity-free. We first consider the case in which all inducing characters coincide and show that multiplicity-freeness fails for every $n\geq 3$. This gives the following result. 

\begin{theorem}\label{prop:equal-characters-general-degree} 
Let $n\geq 3$, and let $\chi$ be a character of $\mathcal O_\ell^\times$. Then the representation $\mathcal B(\chi,\ldots,\chi)$ is not multiplicity-free. In particular, the triple $(\G_n,\B_n,\chi)$ is not a Gelfand pair. 
\end{theorem}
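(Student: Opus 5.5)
The plan is to reduce to the residue field $\mathbb F_q$ and use the classical fact that the Borel-induced trivial representation of $\GL_n(\mathbb F_q)$ is not multiplicity-free once $n\geq 3$.

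\textbf{Step 1: twist away $\chi$.} Since all inducing characters equal $\chi$, the character of $\B_n$ is the restriction of $\chi\circ\det$ from $\G_n$. The projection formula then gives
\[
\mathcal B(\chi,\ldots,\chi)\cong (\chi\circ\det)\otimes \Ind_{\B_n}^{\G_n}\mathbf 1 .
\]
Tensoring with a one-dimensional character is a bijection on irreducibles that preserves multiplicities. So it suffices to show that $\Ind_{\B_n}^{\G_n}\mathbf 1$ is not multiplicity-free.

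\textbf{Step 2: find a subrepresentation inflated from the residue field.} Let $K=\ker(\G_n\to\GL_n(\mathbb F_q))$ be the first congruence subgroup. Then $\B_nK$ is the preimage of $\B_n(\mathbb F_q)$. By transitivity,
\[
\Ind_{\B_n}^{\G_n}\mathbf 1=\Ind_{\B_nK}^{\G_n}\bigl(\Ind_{\B_n}^{\B_nK}\mathbf 1\bigr).
\]
The inner representation contains $\mathbf 1$, so $\Ind_{\B_n}^{\G_n}\mathbf 1$ contains $\Ind_{\B_nK}^{\G_n}\mathbf 1$. Concretely, this subrepresentation is the space of $K$-invariant functions. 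Since $K$ is normal, $\Ind_{\B_nK}^{\G_n}\mathbf 1$ is the inflation of $\Ind_{\B_n(\mathbb F_q)}^{\GL_n(\mathbb F_q)}\mathbf 1$ along the reduction map. Inflation preserves irreducibility and multiplicities, and a representation with a non-multiplicity-free subrepresentation is itself not multiplicity-free. So it suffices to treat the residue-field case.

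\textbf{Step 3: the finite field case.} Here I would invoke Iwahori's theorem. The Hecke algebra $\End_{\GL_n(\mathbb F_q)}\bigl(\Ind_{\B_n(\mathbb F_q)}^{\GL_n(\mathbb F_q)}\mathbf 1\bigr)$ has basis $\{T_w\}_{w\in S_n}$ indexed by the Bruhat double cosets. It satisfies $T_sT_w=T_{sw}$ whenever $\ell(sw)>\ell(w)$.

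For $n\geq 3$, take the simple reflections $s_1=(1\,2)$ and $s_2=(2\,3)$. Then
\[
T_{s_1}T_{s_2}=T_{s_1s_2}\neq T_{s_2s_1}=T_{s_2}T_{s_1},
\]
because $s_1s_2\neq s_2s_1$ are distinct Bruhat cells and the $T_w$ are linearly independent. So the Hecke algebra is noncommutative. A representation is multiplicity-free if and only if its endomorphism algebra is commutative, so the induced representation has some constituent of multiplicity $\geq 2$.

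In fact, by Iwahori's theorem the Hecke algebra is isomorphic to $\mathbb C[S_n]$. The multiplicities are therefore the dimensions of the irreducible representations of $S_n$, and for $n\geq 3$ one of these is at least $2$.

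\textbf{Step 4: conclude.} Combining Steps 1–3, $\mathcal B(\chi,\ldots,\chi)$ is not multiplicity-free. For the Gelfand-pair statement, recall that $(\G_n,\B_n,\chi)$ is a Gelfand pair exactly when $\Ind_{\B_n}^{\G_n}\chi$ is multiplicity-free, so it fails here.

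\textbf{Expected obstacle.} There is no real difficulty. The only point needing care is Step 2: identifying the $K$-invariant subspace with the inflation, which uses that $K$ is normal and that $\B_nK/K\cong\B_n(\mathbb F_q)$.

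If I wanted to avoid citing Iwahori, I would instead compute the relation $T_s^2=(q-1)T_s+qT_1$ directly. Together with the braid relation, this exhibits the noncommutativity by an explicit convolution of the characteristic functions of $\B_n s_1\B_n$ and $\B_n s_2\B_n$ in $\GL_3(\mathbb F_q)$, embedded block-diagonally for larger $n$.
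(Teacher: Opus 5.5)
Your argument is correct, but it takes a different route from the paper.

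\textbf{What is shared.} Both proofs begin with the same twist by $\chi\circ\det$, and both ultimately work over the residue field.

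\textbf{The paper's route.} The paper never uses the Hecke algebra. It takes the permutation representation $\mathbb C[\mathbb P^{n-1}(\mathbb F_q)]$ and uses double transitivity to show that its augmentation submodule $V_n$ is irreducible of dimension $q+\cdots+q^{n-1}$. It then inflates $V_n$ to $\G_n$. Applying Frobenius reciprocity to all of $\Ind_{\B_n}^{\G_n}1$, and counting the $n$ orbits of $\overline{\B}_n$ on $\mathbb P^{n-1}(\mathbb F_q)$, it shows that this constituent occurs with multiplicity exactly $n-1$. Because it works with one specific constituent, the paper needs no analogue of your Step~2 (passing to the $K$-invariant subrepresentation, which is fine as you wrote it). The only fact it uses is $\dim\widetilde V_n^{\B_n}=\dim V_n^{\overline{\B}_n}$ for the inflation $\widetilde V_n$.

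\textbf{Your route.} Your Step~3 replaces that computation with Iwahori's rule $T_sT_w=T_{sw}$ for $\ell(sw)>\ell(w)$, combined with the commutativity criterion of Proposition~\ref{th2}. This is shorter and more conceptual. However, it rests on the Bruhat/Tits-system machinery, and on its own it only shows that some constituent has multiplicity at least $2$. The paper's elementary argument instead names the constituent and gives its dimension and exact multiplicity.

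\textbf{Two small remarks on your asides.} First, the isomorphism $\End\cong\mathbb C[S_n]$ is the Tits deformation theorem, not Iwahori's presentation. It would recover the paper's multiplicity $n-1$ as the dimension of the standard representation of $S_n$. Second, a hands-on proof of noncommutativity needs neither the quadratic relation nor the braid relation. It suffices that $\overline{\B}_ns_1\overline{\B}_n\cdot\overline{\B}_ns_2\overline{\B}_n=\overline{\B}_ns_1s_2\overline{\B}_n$ and $\overline{\B}_ns_2\overline{\B}_n\cdot\overline{\B}_ns_1\overline{\B}_n=\overline{\B}_ns_2s_1\overline{\B}_n$ are distinct double cosets.
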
 

The preceding theorem establishes the failure of multiplicity-freeness when all inducing characters coincide. This raises the question of whether multiplicity-free decompositions can still occur under suitable conditions on the inducing characters. In this context, Onn and Singla~\cite{OnnSingla} studied the restriction of unramified principal series representations of $\GL_3(\F)$ to $\GL_3(\mathcal O)$ and gave a complete description of their irreducible constituents. In the present article, we consider the corresponding decomposition problem directly for Borel-induced representations of the finite group $\G_3$. More precisely, we study the case in which the relative conductors $c(\chi_1\chi_3^{-1})$ and $c(\chi_2\chi_3^{-1})$ are maximal, while $c(\chi_1\chi_2^{-1})$ is arbitrary. For this purpose, we consider the standard parabolic subgroup $P$ of type $(2,1)$ defined in~\eqref{eq:parabolic-two-one}. Using the transitivity of parabolic induction together with the known irreducible decomposition of $\mathcal B(\chi_1,\chi_2)$, we obtain an explicit decomposition of $\mathcal B(\chi_1,\chi_2,\chi_3)$ into pairwise inequivalent irreducible constituents. In particular, we establish multiplicity-freeness and determine the exact number of irreducible constituents, as stated in the following theorem.

\begin{theorem}
Let $\chi_1,\chi_2,\chi_3$ be characters of $\mathcal O_\ell^\times$ satisfying $c(\chi_1\chi_3^{-1})=c(\chi_2\chi_3^{-1})=\ell$. Put $m=c(\chi_1\chi_2^{-1})$ and $N=\ell-m+1$. Write $\mathcal B(\chi_1,\chi_2)=V_1\oplus\cdots\oplus V_N$, where $V_1,\ldots,V_N$ are pairwise inequivalent irreducible constituents. Then $$\mathcal B(\chi_1,\chi_2,\chi_3)\cong\bigoplus_{j=1}^{N}\Ind_P^{\G_3}(V_j\otimes\chi_3),$$ where the representations $\Ind_P^{\G_3}(V_j\otimes\chi_3)$ are irreducible and pairwise inequivalent. Consequently, $\mathcal B(\chi_1,\chi_2,\chi_3)$ is multiplicity-free and has exactly $N$ irreducible constituents. 
\end{theorem}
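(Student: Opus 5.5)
By transitivity of induction, $\mathcal B(\chi_1,\chi_2,\chi_3)=\Ind_P^{\G_3}\Ind_{\B_3}^{P}\chi$. Let $L\cong\G_2\times\mathcal O_\ell^\times$ be the Levi factor of $P$ and $U$ its unipotent radical. Since $\chi$ is trivial on $\N_3\supseteq U$, we have $\Ind_{\B_3}^{P}\chi\cong\mathcal B(\chi_1,\chi_2)\otimes\chi_3$, inflated from $L$ to $P$. Using the decomposition $\mathcal B(\chi_1,\chi_2)=V_1\oplus\cdots\oplus V_N$ of \cite{premGL2}, we get
$$\mathcal B(\chi_1,\chi_2,\chi_3)\cong\bigoplus_{j=1}^N\Ind_P^{\G_3}(V_j\otimes\chi_3).$$
So everything reduces to proving
$$\dim\Hom_{\G_3}\bigl(\Ind_P^{\G_3}(V_i\otimes\chi_3),\Ind_P^{\G_3}(V_j\otimes\chi_3)\bigr)=\delta_{ij}.$$

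To compute these intertwining spaces I will use Mackey's formula,
$$\Hom_{\G_3}\bigl(\Ind_P^{\G_3}\sigma,\Ind_P^{\G_3}\tau\bigr)\cong\bigoplus_{g\in P\backslash \G_3/P}\Hom_{P\cap gPg^{-1}}\bigl({}^g\sigma,\tau\bigr),$$
with $\sigma=V_i\otimes\chi_3$ and $\tau=V_j\otimes\chi_3$. The double cosets $P\backslash\G_3/P$ over $\mathcal O_\ell$ come from those of $P$ over $\mathbb F_q$ (identity and the nontrivial Weyl class), refined by the valuation of the entries in the lower-left block. Concretely, I take representatives $g_0=1$ and $g_k$ for $1\le k\le \ell-1$ with lower-left row having entries of valuation $k$, together with the long element $w$ representing the big cell. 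These arise from the $P$-orbits on $\mathcal O_\ell^3$ lines, i.e.\ on $\mathbb P^2(\mathcal O_\ell)$, indexed by the "distance" of a line to the $P$-stable plane. The identity coset contributes $\Hom_L(V_i,V_j)=\delta_{ij}$. It remains to show every other coset contributes zero.

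For $g\neq 1$, the group $P\cap gPg^{-1}$ contains a subgroup $H_g$ of $U$-type elements, and possibly of elements of $gUg^{-1}$, of congruence depth governed by $k$. On $H_g$ the representation $\tau$ acts trivially, since it is inflated through $U$. The conjugate ${}^g\sigma$, however, involves the torus through $\chi_3$ in a position where $\tau$ involves $\chi_1$ or $\chi_2$. Restricting to a suitable one-parameter subgroup of $T_3\cap P\cap gPg^{-1}$ contained in $1+\varpi^{\ell-1}$, one should obtain a central-type character comparison that forces the relevant isotypic components to vanish, because $c(\chi_1\chi_3^{-1})=c(\chi_2\chi_3^{-1})=\ell$. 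Roughly, after restricting to the torus element $\diag(1,1,1+\varpi^{\ell-1}x)$ conjugated appropriately, together with a root subgroup element of depth $\ell-1$, one sees that ${}^g\sigma$ and $\tau$ differ by the nontrivial character $\chi_a\chi_3^{-1}$ on $1+\varpi^{\ell-1}\mathcal O_\ell$ with $a\in\{1,2\}$. This mimics the $\GL_2$ irreducibility argument of \cite{premGL2}, applied to the pairs $(1,3)$ and $(2,3)$.

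\textbf{Main obstacle.} The hard step is the intermediate cosets $g_k$ with $1\le k\le\ell-1$. There $P\cap g_kPg_k^{-1}$ is not a product of nice subgroups, and $V_i$ is a genuinely non-one-dimensional $\G_2$-representation, so I cannot simply compare characters. I would first try to find, inside $P\cap g_kPg_k^{-1}$, an abelian subgroup $A_k$ of the form (a root subgroup $X_{13}$ or $X_{23}$ of depth $\ell-k$) times (a torus piece $\diag(1,1,1+\varpi^{k}\mathcal O)$). On $A_k$, $\tau$ is trivial on the root part, while ${}^g\sigma$ restricted to it produces a nontrivial character via the conductor hypothesis. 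If this fails for some $k$, the fallback is to restrict $V_i$ to the Borel of $\G_2$: since $V_i\subseteq\mathcal B(\chi_1,\chi_2)$, Frobenius reciprocity then reduces the vanishing to a $\B_3$-level Mackey computation, where the analogous vanishing is what the $n=3$ irreducibility theorem's proof controls on the relevant cells.
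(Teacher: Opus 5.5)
\textbf{Gap: the vanishing of the non-identity $P$-double-coset terms is never established.} Several parts of your setup are correct: induction in stages, the isomorphism $\Ind_{\B_3}^P\chi\cong\mathcal B(\chi_1,\chi_2)\otimes\chi_3$, the description of $P\backslash\G_3/P$, and the identity term $\Hom_{\G_2}(V_i,V_j)$. Once every $g\notin P$ is shown to contribute zero, $\dim\Hom=\delta_{ij}$ follows. But that vanishing is the whole content of the theorem, and the mechanism you sketch does not deliver it. Your test elements of $P\cap gPg^{-1}$ are carried by $g^{-1}(\cdot)g$ to \emph{non-central} elements of the Levi factor, e.g.\ $\diag(1+\varepsilon,1)\in\G_2$ with $\varepsilon\in\varpi^{\ell-1}\mathcal O_\ell$. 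On such elements $V_i$ does not act by a scalar. Its eigencharacters on $\mathcal B(\chi_1,\chi_2)$ are also not confined to $\chi_1,\chi_2$: the restriction to $I_2+\varpi^{\ell-1}M_2(\mathcal O_\ell)$ contains the characters attached to all $\GL_2(\mathbb F_q)$-conjugates of upper triangular matrices with the prescribed diagonal. So no ``central-type'' comparison with $\chi_3$ is available.

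For the intermediate cosets it is worse. Since $g_k\equiv I_3\pmod{\varpi}$, every element of $\To_3\cap\bigl(I_3+\varpi^{\ell-1}M_3(\mathcal O_\ell)\bigr)$ commutes with $g_k$, so torus elements give no information at all. For your subgroup $A_k$, one computes that $g_k^{-1}(I_3+fE_{13})g_k$ has Levi component $\bigl(\diag(1+\varpi^kf,1),\,1-\varpi^kf\bigr)$, which again acts non-scalarly through $V_i$. Hence the claim that ${}^g\sigma|_{A_k}$ ``produces a nontrivial character'' is unjustified.

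Your fallback is the right route. Since $V_i\otimes\chi_3$ is a summand of $\Ind_{\B_3}^P\chi$, the $P$-term for $g$ vanishes as soon as every $\B_3$-double coset inside $PgP$ contributes zero to $\End_{\G_3}\mathcal B(\chi_1,\chi_2,\chi_3)$. But this is \emph{not} what the proof of Theorem~\ref{thm:irreducibility} controls. There, for the representatives $\bigl(\begin{smallmatrix}0&1&0\\1&0&0\\\varpi^i&\varpi^j&1\end{smallmatrix}\bigr)$ with $(i,j)\neq(\ell,\ell)$, and for $g_{i,j,k}$ with $i<j$, $i\le k$ (many of which lie outside $P$), the separating element compares $\chi_1$ with $\chi_2$ on $1+\varpi^{\ell-1}\mathcal O_\ell$. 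That comparison is unavailable when $m<\ell$.

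What is missing is a new case analysis using only $\chi_3\chi_1^{-1}$ and $\chi_3\chi_2^{-1}$. The paper takes representatives $g=wL$, with $w$ a permutation matrix and $L$ lower unitriangular with entries in $\varpi\mathcal O_\ell$, and argues as follows.
\begin{itemize}
\item If $r=w^{-1}(3)\neq3$, then $h=I_3+\varepsilon E_{33}$ satisfies $g^{-1}hg=I_3+\varepsilon E_{rr}$, because $\varepsilon$ annihilates the $\varpi$-error.
\item If $w\in\{1,(12)\}$ and $g\notin P$, one solves $fz=\varepsilon$ for a nonzero third-row entry $z\in\varpi\mathcal O_\ell$ of $g$. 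One then conjugates a suitable upper triangular $X$ built from $f$ into $\B_3$, so that $\chi(h)$ and $\chi^g(h)$ differ by $(\chi_3\chi_a^{-1})(1+\varepsilon)\neq1$ for some $a\in\{1,2\}$.
\end{itemize}
This yields $\dim\End_{\G_3}\mathcal B(\chi_1,\chi_2,\chi_3)=\dim\End_{\G_2}\mathcal B(\chi_1,\chi_2)=N$. Your $\delta_{ij}$ bookkeeping, or a count over the $N$ nonzero summands, then finishes the proof.
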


The proofs of our results rely on intertwining-space calculations, Mackey theory, and permutation representations over finite fields. To establish the irreducibility criterion for $\G_3$ in Theorem ~\ref{thm:irreducibility}, we apply Mackey's intertwining formula to compute the endomorphism algebra of $\mathcal B(\chi_1,\chi_2,\chi_3)$. This requires a detailed analysis of the double-coset space $\B_3\backslash\G_3/\B_3$ and the corresponding intertwining conditions on the intersections $\B_3\cap g\B_3g^{-1}$. We show that the identity double coset always contributes a one-dimensional intertwining space, while the contributions from the remaining double cosets are governed by the relative conductors of the inducing characters. In particular, all non-identity contributions vanish precisely when the pairwise relative conductors are maximal, yielding the necessary and sufficient condition for irreducibility and thereby proving Conjecture~\ref{conj:borel-irreducibility} for $n=3$. The proof of Theorem~\ref{prop:equal-characters-general-degree} follows a different approach. When all inducing characters coincide, we reduce the problem, up to twisting by a one-dimensional character, to the representation $\mathcal B(1,\ldots,1)$. Using the permutation representation of $\GL_n(\mathbb F_q)$ on the projective space $\mathbb P^{n-1}(\mathbb F_q)$ and its inflation to $\G_n$, we identify an irreducible constituent of dimension $q+q^2+\cdots+q^{n-1}$ that occurs with multiplicity exactly $n-1$ in $\mathcal B(1,\ldots,1)$. This establishes the failure of multiplicity-freeness for $n\geq 3$. Finally, to prove the decomposition result in Theorem~\ref{thm:separated-character-mf}, we use the transitivity of parabolic induction to express $\mathcal B(\chi_1,\chi_2,\chi_3)$ as a representation induced from the standard parabolic subgroup $P$ of type $(2,1)$. We then use the known irreducible decomposition of $\mathcal B(\chi_1,\chi_2)$ to obtain a direct-sum decomposition into representations induced from its irreducible constituents. The main step is to establish the irreducibility and pairwise inequivalence of these induced representations. For this, we apply Mackey theory to the double-coset space $P\backslash\G_3/P$ and show that the maximal relative-conductor assumptions force the intertwining contributions from all double cosets outside $P$ to vanish. The remaining contribution reduces to the intertwining spaces between the irreducible constituents of $\mathcal B(\chi_1,\chi_2)$. Consequently, each induced summand is irreducible, and distinct summands are inequivalent. This proves that $\mathcal B(\chi_1,\chi_2,\chi_3)$ is multiplicity-free and determines the exact number of its irreducible constituents as $\ell-m+1$, where $m=c(\chi_1\chi_2^{-1})$.

The remainder of the article is organized as follows. In Section~\ref{pril}, we recall the necessary preliminaries, including Mackey theory, irreducibility criteria in terms of endomorphism algebras, relative conductors of characters, and representatives of the relevant double cosets. In Section~\ref{main}, we establish the irreducibility criterion for $\G_3$ and study the failure of multiplicity-freeness when all inducing characters coincide  for $n\geq 3$. Finally, in Section~\ref{sec:separated-principal-series}, we study the irreducible decomposition of $\mathcal B(\chi_1,\chi_2,\chi_3)$ when two relative conductors are maximal. Using the known decomposition of $\mathcal B(\chi_1,\chi_2)$, we obtain an explicit decomposition into irreducible constituents and determine their dimensions.

\section{Preliminaries}\label{pril}

In this section, we recall the necessary preliminaries concerning intertwining spaces, irreducibility and multiplicity-freeness criteria, relative conductors of characters, and essential double-coset representatives. 

\subsection{Intertwining and irreducibility criteria}\label{ssec:mackey-theory}\label{ssec:multiplicity-free}

We begin with the following standard consequence of Mackey's intertwining formula; see~\cite[Theorem~32.1]{bump}.

\begin{lemma}\label{cor1}
Let $G$ be a finite group, and let $H$ and $K$ be subgroups of $G$. Suppose that $\chi_1$ and $\chi_2$ are one-dimensional characters of $H$ and $K$, respectively. For $g\in G$, put $J_g=K\cap gHg^{-1}$ and define $\chi_1^g(h)=\chi_1(g^{-1}hg)$ for $h\in gHg^{-1}$. Then a double coset $KgH$ supports a nonzero intertwining operator between $\Ind_H^G\chi_1$ and $\Ind_K^G\chi_2$ if and only if $\left.\chi_1^g\right|_{J_g}=\left.\chi_2\right|_{J_g}$. Moreover, each such double coset contributes a one-dimensional intertwining space. Consequently, $$\dim\Hom_G(\Ind_H^G\chi_1,\Ind_K^G\chi_2)=\#\{g\in\mathcal D:\left.\chi_1^g\right|_{J_g}=\left.\chi_2\right|_{J_g}\},$$ where $\mathcal D$ is a complete set of representatives for $K\backslash G/H$.
\end{lemma}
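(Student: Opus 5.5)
We identify $\Hom_G(\Ind_H^G\chi_1,\Ind_K^G\chi_2)$ with a space of functions on $G$ and then sort those functions by double coset. By Frobenius reciprocity and the standard realization of induced representations, this Hom space is isomorphic to the space $\Delta$ of functions $f\colon G\to\mathbb C$ satisfying
\[
f(kgh)=\chi_2(k)\,f(g)\,\chi_1(h)\qquad (k\in K,\ g\in G,\ h\in H).
\]
This is the form of Mackey's intertwining formula in \cite[Theorem~32.1]{bump}, which we take as given. The map sends $f$ to the operator $\phi\mapsto\bigl(x\mapsto\sum_{y\in G/H} f(xy)\phi(y)\bigr)$, suitably normalized. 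Since $G$ is the disjoint union of the double cosets $KgH$ with $g\in\mathcal D$, the space $\Delta$ splits as $\Delta=\bigoplus_{g\in\mathcal D}\Delta_g$, where $\Delta_g$ consists of the elements of $\Delta$ supported on $KgH$. The dimension formula therefore reduces to showing that $\dim\Delta_g$ is $1$ or $0$ according as the character condition holds or fails.

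Fix $g$. A function $f\in\Delta_g$ is determined by the single value $f(g)$, because $f(kgh)=\chi_2(k)f(g)\chi_1(h)$. Hence $\dim\Delta_g\le1$.

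The content of the lemma is deciding when this prescription is well defined. Suppose $kgh=k'gh'$. Then $j:=k'^{-1}k=gh'h^{-1}g^{-1}$ lies in $J_g=K\cap gHg^{-1}$. Consistency of the two formulas for $f$ at this point requires
\[
\chi_2(k)\chi_1(h)=\chi_2(k')\chi_1(h'),
\]
which is equivalent to $\chi_2(j)=\chi_1(h'h^{-1})=\chi_1(g^{-1}jg)=\chi_1^g(j)$.

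Consequently, if $\chi_1^g|_{J_g}=\chi_2|_{J_g}$, the formula defines a nonzero $f\in\Delta_g$, and $\dim\Delta_g=1$. Conversely, if $\chi_1^g(j)\ne\chi_2(j)$ for some $j\in J_g$, write $j=g(g^{-1}jg)g^{-1}$. Applying the transformation law in two ways gives
\[
f(g)=f\bigl(j\,g\,(g^{-1}j^{-1}g)\bigr)=\chi_2(j)\chi_1^g(j)^{-1}f(g),
\]
so $f(g)=0$ and $\Delta_g=0$.

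Summing $\dim\Delta_g$ over $g\in\mathcal D$ gives the stated count. The only delicate step is the well-definedness check. It uses essentially that $\chi_1$ and $\chi_2$ are one-dimensional, since for higher-dimensional representations $\Delta_g$ becomes $\Hom_{J_g}$ between the restrictions and need not have dimension at most one. Everything else is bookkeeping.
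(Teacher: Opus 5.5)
Your proof is correct and takes the same route as the paper, which gives no argument of its own and simply cites Mackey's intertwining formula \cite[Theorem~32.1]{bump}. You supply the double-coset bookkeeping the paper leaves implicit: the space $\Delta$ splits over $K\backslash G/H$, each $\Delta_g$ is determined by $f(g)$, and $\Delta_g$ is nonzero exactly when $\chi_2=\chi_1^g$ on $J_g$.
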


To apply this formula to the irreducibility and decomposition of induced representations, we recall the standard endomorphism criterion. A representation is said to be \emph{multiplicity-free} if every irreducible constituent occurs with multiplicity at most one. A triple $(G,H,\phi)$, where $\phi$ is a one-dimensional character of $H$, is called a \emph{Gelfand triple} if $\Ind_H^G\phi$ is multiplicity-free. When $\phi$ is trivial, the pair $(G,H)$ is called a \emph{Gelfand pair}. The following criterion is well known; see~\cite[Theorem~45.1]{bump}.

\begin{prop}\label{th2}
Let $\V$ be a nonzero finite-dimensional complex representation of a finite group $G$. Then $\V$ is multiplicity-free if and only if $\End_G(\V)$ is commutative. In this case, $\dim\End_G(\V)$ equals the number of pairwise inequivalent irreducible constituents of $\V$. Moreover, $\V$ is irreducible if and only if $\dim\End_G(\V)=1$.
\end{prop}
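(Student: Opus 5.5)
The plan is to reduce everything to the structure of $\End_G(\V)$ given by Maschke's theorem and Schur's lemma. Since $G$ is finite and we work over $\mathbb C$, $\V$ is completely reducible. So first I will write $\V\cong\bigoplus_{i=1}^{r} W_i^{\oplus m_i}$, where $W_1,\ldots,W_r$ are pairwise inequivalent irreducible representations of $G$ and each $m_i\geq 1$. Since $\V\neq 0$, we have $r\geq1$.

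The key step is to identify the endomorphism algebra. Schur's lemma over the algebraically closed field $\mathbb C$ gives $\Hom_G(W_i,W_j)=0$ for $i\neq j$ and $\Hom_G(W_i,W_i)=\mathbb C$. Decomposing a $G$-endomorphism of $\V$ into blocks according to the direct sum then yields an algebra isomorphism
\[
\End_G(\V)\cong\bigoplus_{i=1}^{r}\mathrm M_{m_i}(\mathbb C).
\]
I must check that composition of $G$-maps corresponds to matrix multiplication in each block. This is routine, because a map $W_i^{\oplus m_i}\to W_i^{\oplus m_i}$ is an $m_i\times m_i$ array of scalars times $\mathrm{id}_{W_i}$. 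I expect this identification to be the only real work in the argument, and it is standard.

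The three assertions then follow from this isomorphism.
\begin{itemize}
\item A direct sum of matrix algebras is commutative if and only if every $m_i=1$, because $\mathrm M_m(\mathbb C)$ is noncommutative exactly when $m\geq2$. This is precisely multiplicity-freeness.
\item In general $\dim\End_G(\V)=\sum_i m_i^2$. In the multiplicity-free case this sum equals $r$, the number of pairwise inequivalent irreducible constituents.
\item The condition $\sum_i m_i^2=1$ forces $r=1$ and $m_1=1$, that is, $\V\cong W_1$ is irreducible. Conversely, if $\V$ is irreducible then Schur's lemma gives $\End_G(\V)=\mathbb C$.
\end{itemize}
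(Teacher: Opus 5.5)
Your argument is correct: decomposing $\V\cong\bigoplus_i W_i^{\oplus m_i}$ by Maschke's theorem and using Schur's lemma to identify $\End_G(\V)\cong\bigoplus_i \mathrm{M}_{m_i}(\mathbb C)$ yields all three assertions at once. The paper gives no proof of its own and simply cites this standard fact from Bump, whose proof rests on the same Maschke--Schur decomposition of $\End_G(\V)$ into matrix blocks.
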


\subsection{Conductors}\label{ssec:conductors}

An additive character of $\mathcal O_\ell$ means a homomorphism
from its additive group to $\mathbb C^\times$, whereas a
multiplicative character means a homomorphism
$\mathcal O_\ell^\times\to\mathbb C^\times$.
Their conductors are defined using the filtrations
\begin{equation*}
\mathcal O_\ell \supseteq \varpi\mathcal O_\ell \supseteq\cdots\supseteq \varpi^{\ell}\mathcal O_\ell =\{0\}
\end{equation*}
and
\begin{equation*}
\mathcal O_\ell^\times =U^0 \supseteq U^1 \supseteq\cdots\supseteq U^{\ell}=\{1\},
\end{equation*}
where $U^r=1+\varpi^r\mathcal O_\ell$ for $1\leq r\leq\ell$.

\begin{definition}\label{cond} 
The conductors of an additive character $\phi$ of $\mathcal O_\ell$ and a multiplicative character $\chi$ of $\mathcal O_\ell^\times$ are defined by $c(\phi)=\min\{0\leq r\leq\ell:\left.\phi\right|_{\varpi^r\mathcal O_\ell}=1\}$ and $c(\chi)=\min\{0\leq r\leq\ell:\left.\chi\right|_{U^r}=1\}$, respectively. In either case, a character is called \emph{primitive} if its conductor is $\ell$.
\end{definition}

For the proof of Conjecture~\ref{conj:borel-irreducibility} when $n=3$, let $\chi_1,\chi_2,\chi_3$ be multiplicative characters of $\mathcal O_\ell^\times$. By Definition~\ref{cond}, for $\ell\geq 2$, we have $$c(\chi_i\chi_j^{-1})=\ell \quad\Longleftrightarrow\quad \left.\chi_i\right|_{U^{\ell-1}}\neq\left.\chi_j\right|_{U^{\ell-1}},$$ for all $1\leq i<j\leq 3$.

\subsection{Double-coset representatives for $\G_3$}\label{ssec:double-cosets}
To apply Lemma~\ref{cor1} in the proof of Theorem~\ref{thm:irreducibility}, we need representatives for the double-coset space $\B_3\backslash\G_3/\B_3$. We recall the representatives given by Onn et al.~\cite[Section~4]{MR2267575} in the following lemma.

\begin{lemma}\label{lem:covering}
Every $\B_3$-double coset in $\G_3$ contains a representative from one of the following families:
\begin{alignat*}{2}
\textup{(a)}\quad g_{i,j,k}
&=\begin{pmatrix}1&0&0\\\varpi^i&1&0\\x\varpi^j&\varpi^k&1\end{pmatrix},
&\qquad\textup{(b)}\quad g_{i,j,1}
&=\begin{pmatrix}0&1&0\\1&0&0\\\varpi^i&\varpi^j&1\end{pmatrix},\\
\textup{(c)}\quad g_{i,j,2}
&=\begin{pmatrix}1&0&0\\\varpi^i&0&1\\\varpi^j&1&0\end{pmatrix},
&\qquad\textup{(d)}\quad g_{i,1}
&=\begin{pmatrix}0&0&1\\1&0&0\\\varpi^i&1&0\end{pmatrix},\\
\textup{(e)}\quad g_{i,2}
&=\begin{pmatrix}0&1&0\\0&\varpi^i&1\\1&0&0\end{pmatrix},
&\qquad\textup{(f)}\quad g
&=\begin{pmatrix}0&0&1\\0&1&0\\1&0&0\end{pmatrix}.
\end{alignat*}
Here $1\leq i,j,k\leq\ell$ in \textup{(a)}, $1\leq i,j\leq\ell$ in \textup{(b)} and \textup{(c)}, and $1\leq i\leq\ell$ in \textup{(d)} and \textup{(e)}, with $x\in\mathcal O_\ell^\times$ in \textup{(a)}. 
\end{lemma}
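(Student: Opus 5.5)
The plan is to lift the Bruhat decomposition of $\GL_3(\mathbb F_q)$ to $\G_3$ and then use the two-sided action of $\B_3$ to normalize what remains. Let $K^1=\ker(\G_3\to\GL_3(\mathbb F_q))$. Reduction modulo $\varpi$ maps $\B_3$ onto $\B_3(\mathbb F_q)$. So every $g\in\G_3$ can be written as $g=b_1\,w\,k\,b_2$ with $b_1,b_2\in\B_3$, $k\in K^1$, and $w$ one of the six permutation matrices. Here $w$ is the Weyl element whose Bruhat cell contains $\bar g$; we take the permutation matrices as the lifts of the Weyl elements.

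Next we factor $k$. Every element of $K^1$ is congruent to the identity, so all its leading principal minors are units. Hence $k$ has a Gauss (LU) decomposition $k=n^-\,t\,n$, with $n^-$ lower unipotent with entries in $\varpi\mathcal O_\ell$, $t$ diagonal and $n$ upper unipotent. Absorbing $tn$ into $b_2$ gives $g\in \B_3\,w\,n^-\,\B_3$. We then write $w n^- w^{-1}$ and split $n^-$ according to the inversions of $w$. The factor of $n^-$ made of root subgroups $\alpha<0$ with $w\alpha>0$ is conjugated by $w$ into $\N_3$. Since the root subgroups of $\N_3^-$ can be ordered so that this factor sits on the left, it can be absorbed into $b_1$. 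What remains is $g\in\B_3\, w\, n^-_w\,\B_3$, where $n^-_w$ runs over the lower unipotent matrices supported on the negative roots $\alpha$ with $w\alpha<0$, with entries in $\varpi\mathcal O_\ell$.

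Now we count the surviving entries for each $w$. For $w=1$ there are three, which gives family (a). For the two simple reflections there is one each: these are lower-left entries such as the $(3,1),(3,2)$ positions after multiplying out $w n^-_w$, and this gives the two-parameter families (b) and (c) once the coordinates are written as in the statement. For the two $3$-cycles there is one entry each, giving (d) and (e). For the longest element there are none, giving (f). For (b) and (c) I would verify directly, by multiplying out $w n^-_w$, that the matrices have exactly the displayed shapes up to left and right multiplication by $\B_3$.

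Finally we normalize by the torus. Conjugating by $d=\diag(d_1,d_2,d_3)\in\To_3$ (that is, using $d$ on the left and $d^{-1}w$-twisted on the right, which stays inside the double coset) multiplies a lower entry in position $(r,s)$ by $d_r d_s^{-1}$. Each nonzero entry can be written as $u\varpi^i$ with $u$ a unit, where we use $i=\ell$ for the zero entry. The scalings then let us make the units equal to $1$, except for one relation in case (a). There the three scalings $d_2/d_1$, $d_3/d_2$ and $d_3/d_1$ are dependent, so the invariant $x=u_{31}u_{21}^{-1}u_{32}^{-1}$ survives; this is exactly the parameter $x\in\mathcal O_\ell^\times$ in $g_{i,j,k}$. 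In the other families the number of free scaling parameters is at least the number of entries, so all units can be normalized to $1$.

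The main obstacle I expect is the bookkeeping in the absorption step. One has to check that the root subgroups with $w\alpha>0$ really can be moved to the left of $n^-_w$ without creating new entries, and that the resulting shapes match the stated matrices (b)–(e), including the positions of the $1$'s. I would handle this case by case for the five nontrivial $w$ by explicit $3\times3$ multiplication, using the commutator relations in $\N_3^-$ only for the single non-commuting pair of root subgroups.
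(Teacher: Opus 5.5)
Your argument is correct. The paper does not prove this lemma itself; it quotes it from Onn et al. Your route (lifting the Bruhat decomposition of $\GL_3(\mathbb F_q)$, taking the Gauss decomposition of an element congruent to $I_3$, and absorbing into $\B_3$ the root subgroups that $w$ carries into $\N_3$) is exactly how the paper later obtains the normal form \eqref{eq:wL-normal-form} in the proof of Theorem~\ref{thm:separated-character-mf}. Your torus normalization then correctly finishes the job, leaving only the unit $x$ in family (a).

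One slip to fix: for the two simple reflections, two lower entries survive, not one. In general the number is $3$ minus the length of $w$, and your own description of (b) and (c) as two-parameter families already reflects this. What makes both units normalizable there is that the two root characters involved are independent ($d_3/d_1,d_3/d_2$ for (b), and $d_2/d_1,d_3/d_1$ for (c)), not merely that there are enough scaling parameters.
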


\section{Parabolically induced representations}\label{main}

In this section, we prove Conjecture~\ref{conj:borel-irreducibility} for $n=3$ and study the multiplicity-freeness of the induced representations when all inducing characters coincide. In particular, we show that the corresponding representations are not multiplicity-free for $n\geq 3$, and hence the associated triples are not Gelfand triples.

\subsection{Irreducibility of $\mathcal B(\chi_1,\chi_2,\chi_3)$}\label{ssec:irreducibility}

Let $\chi_1,\ldots,\chi_n$ be characters of $\mathcal O_\ell^\times$, and let $\chi$ be the character of $\B_n$ defined by $\chi((a_{ij}))=\prod_{i=1}^n\chi_i(a_{ii})$. Recall that $\mathcal B(\chi_1,\ldots,\chi_n)=\Ind_{\B_n}^{\G_n}\chi$. For $n=3$, we have $|\G_3|=q^{9(\ell-1)}(q^3-1)(q^3-q)(q^3-q^2)$ and $|\B_3|=q^{6(\ell-1)+3}(q-1)^3$. Consequently, the dimension of the induced representation is given by $$\dim\mathcal B(\chi_1,\chi_2,\chi_3)=[\G_3:\B_3]=q^{3(\ell-1)}(q+1)(q^2+q+1).$$ 
By Lemma~\ref{cor1} and Proposition~\ref{th2}, the irreducibility of $\mathcal B(\chi_1,\chi_2,\chi_3)$ is determined by the double cosets contributing to its self-intertwining space. We establish Conjecture~\ref{conj:borel-irreducibility} for $n=3$ in the following theorem.

\begin{theorem}\label{thm:irreducibility}

Let $\ell\geq 2$, and let $\chi_1,\chi_2,\chi_3$ be complex characters of $\mathcal O_\ell^\times$. Then the representation $\mathcal B(\chi_1,\chi_2,\chi_3)$ is irreducible if and only if $$\left.\chi_i\right|_{1+\varpi^{\ell-1}\mathcal O_\ell}\neq\left.\chi_j\right|_{1+\varpi^{\ell-1}\mathcal O_\ell}\qquad(1\leq i<j\leq 3).$$
\end{theorem}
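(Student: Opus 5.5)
We use Lemma~\ref{cor1} with $H=K=\B_3$ and $\chi_1=\chi_2=\chi$, together with Proposition~\ref{th2}. Then $\mathcal B(\chi_1,\chi_2,\chi_3)$ is irreducible if and only if the identity is the only representative $g$ in Lemma~\ref{lem:covering} satisfying $\chi^g|_{J_g}=\chi|_{J_g}$, where $J_g=\B_3\cap g\B_3g^{-1}$. The identity is $g_{\ell,\ell,\ell}$ in family (a), and it always contributes. So the task is to decide, for each remaining representative, exactly when $\chi^g$ and $\chi$ agree on $J_g$.

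\textbf{Necessity.} Suppose $\chi_i|_{U^{\ell-1}}=\chi_j|_{U^{\ell-1}}$ for some $i<j$. We exhibit a non-identity representative in family (a) that contributes. Take the "most degenerate" one, with a single off-diagonal entry $\varpi^{\ell-1}$ in position $(j,i)$; for example, $g_{\ell-1,\ell,\ell}$ when $(i,j)=(1,2)$, and similarly with the $k$- or $x\varpi^j$-entry for the other pairs. Compute $J_g$ directly: an upper triangular $b$ lies in $g\B_3g^{-1}$ exactly when $g^{-1}bg$ is upper triangular. For such a $g$, this imposes congruences of the form $b_{ii}\equiv b_{jj}\pmod{\varpi}$, together with conditions on entries above the diagonal. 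On $J_g$, conjugation by $g$ changes the diagonal entry $b_{ii}$ by a factor in $U^{\ell-1}$. Hence $\chi^g(b)\chi(b)^{-1}$ is a value of $\chi_i\chi_j^{-1}$ on $U^{\ell-1}$, which is trivial by assumption. So this coset contributes, giving $\dim\End\ge 2$ and reducibility.

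\textbf{Sufficiency.} Assume $c(\chi_i\chi_j^{-1})=\ell$ for all $i<j$, and run through families (a)--(f).

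For the Weyl-type representatives (b)--(f), which involve a nontrivial permutation mod $\varpi$, $J_g$ contains the diagonal torus elements $\diag(a_1,a_2,a_3)$ with $a_s\in U^{\ell-1}$ wherever needed. Their entries can in fact be arbitrary units in suitable positions, so $J_g$ contains torus elements on which $\chi^g$ involves a permuted character. Agreement would then force $\chi_{w(s)}=\chi_s$ on $U^{\ell-1}$ for some $s$ with $w(s)\ne s$, which is a contradiction.

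For family (a) with $(i,j,k)\ne(\ell,\ell,\ell)$, choose the smallest exponent $r<\ell$ among the lower entries, say in position $(t,s)$. Consider elements $b\in J_g$ of the form $\diag(1,\dots,1+\varpi^{\ell-1}u,\dots)$, adjusted by a suitable upper unipotent correction. Compute $g^{-1}bg$ and check that its diagonal equals $b$'s diagonal up to a swap of a $U^{\ell-1}$ factor between positions $s$ and $t$. Evaluating characters then yields $\chi_s\chi_t^{-1}(1+\varpi^{\ell-1}u)=1$ for all $u$, contradicting primitivity. More precisely, the mechanism is this: an element $b$ with $b_{tt}-b_{ss}\in\varpi^{\ell-r}$ generates an entry $\varpi^r(b_{tt}-b_{ss})$, which has to be absorbed by an upper entry $b_{st}$. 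This shifts the diagonal of $g^{-1}bg$ by a term $\varpi^r b_{st}$ in $U^{\ell-1}$.

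\textbf{Main obstacle.} The hardest step is family (a) with a generic triple $(i,j,k)$ and unit $x$. There $J_g$ is cut out by coupled congruences (for instance $x\varpi^j$ versus $\varpi^{i+k}$), so the test element has to be chosen carefully, possibly distinguishing the cases $j<i+k$, $j>i+k$ and $j=i+k$. I would first try the uniform choice above, keyed to the minimal exponent, and verify the $(3,1)$-corner case separately.
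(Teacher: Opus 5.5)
\textbf{Verdict: genuine gap, in family (a).} Your overall route is the paper's. You use Mackey's formula over the families of Lemma~\ref{lem:covering}. For necessity you take the cosets of $I_3+\varpi^{\ell-1}E_{21}$, $I_3+\varpi^{\ell-1}E_{31}$, $I_3+\varpi^{\ell-1}E_{32}$, as the paper does in Cases N1--N3. For families (b)--(f) you use torus elements with entries in $1+\varpi^{\ell-1}\mathcal O_\ell$. Your wording for (b)--(f) is loose: the diagonal entries of $J_g$ are generally constrained by congruences, not arbitrary units. The fact you actually need is exact and easy, though. Since $g\equiv w\pmod{\varpi}$ with $w\neq 1$ a permutation matrix, for any diagonal $D$ with entries in $\varpi^{\ell-1}\mathcal O_\ell$ one has $g^{-1}(I_3+D)g=I_3+w^{-1}Dw$, because every correction term carries an extra factor of $\varpi$.

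The gap is family (a). You flag it as the main obstacle but do not resolve it. Moreover, your rule ``work in the position of the smallest exponent'' is wrong as stated when the minimum is attained twice. Suppose $i=j\leq k$ and you use position $(2,1)$, i.e.\ $X=I_3+dE_{12}$ with $d\varpi^i=t\in\varpi^{\ell-1}\mathcal O_\ell$. Then $gXg^{-1}$ has $(3,2)$-entry $xd\varpi^j=xt\neq 0$, so it does not lie in $\B_3$. Symmetrically, position $(3,2)$ fails when $k=j\leq i$, since the $(2,1)$-entry becomes $-xt$.

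The missing step, supplied in the paper's Case 6, is to give the $(3,1)$ corner priority:
\begin{itemize}
\item If $j=\min\{i,j,k\}$, take $X=I_3+fE_{13}$ with $xf\varpi^j=t$.
\item Otherwise either $i<j$, and you take $X=I_3+dE_{12}$ with $d\varpi^i=t$,
\item or $k<j$, and you take $X=I_3+eE_{23}$ with $e\varpi^k=t$.
\end{itemize}
In each case $h=gXg^{-1}$ is upper triangular, with diagonal $(1-t,1,1+t)$, $(1-t,1+t,1)$, or $(1,1-t,1+t)$ respectively. Hence $h\in J_g$ and $\chi^g(h)=\chi(X)=1$. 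Also $\chi(h)=(\chi_s^{-1}\chi_r)(1+t)$, where $(s,r)$ is $(1,3)$, $(1,2)$, $(2,3)$ respectively. Since $t$ was chosen with $\chi_s(1+t)\neq\chi_r(1+t)$, this value is not $1$.

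This also removes the difficulty you anticipated. The coupled entry $\varpi^{i+k}-x\varpi^j$ of $g^{-1}$, and any split into $j<i+k$, $j=i+k$, $j>i+k$, never matter. Once the relevant inequality ($j\leq i,k$, or $i<j$, or $k<j$) holds, every lower-triangular entry of $gXg^{-1}-I_3$ is $t$ times an element of $\varpi\mathcal O_\ell$, hence zero.
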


\begin{proof}
For $g\in\G_3$, put $H_g=\B_3\cap g\B_3g^{-1}$ and define $\chi^g(y)=\chi(g^{-1}yg)$ for $y\in g\B_3g^{-1}$. Let $\mathcal D$ be a complete set of distinct double-coset representatives obtained from Lemma~\ref{lem:covering}, with $I_3\in\mathcal D$. By Lemma~\ref{cor1}, we have
\begin{equation}\label{eq:mackey}
\End_{\G_3}\bigl(\mathcal B(\chi_1,\chi_2,\chi_3)\bigr)\cong\bigoplus_{g\in\mathcal D}\Hom_{H_g}\bigl(\chi|_{H_g},\chi^g|_{H_g}\bigr).
\end{equation}
Since the inducing characters are one-dimensional, each summand satisfies
\begin{equation}\label{eq:hom}
\dim\Hom_{H_g}\bigl(\chi|_{H_g},\chi^g|_{H_g}\bigr)=\begin{cases}1,&\chi|_{H_g}=\chi^g|_{H_g},\\0,&\chi|_{H_g}\neq\chi^g|_{H_g}.\end{cases}
\end{equation}

\noindent\emph{Sufficiency.} Suppose that the restrictions of $\chi_1,\chi_2,\chi_3$ to $1+\varpi^{\ell-1}\mathcal O_\ell$ are pairwise distinct. Then, for each $1\leq r<s\leq 3$, there exists $t_{rs}\in\varpi^{\ell-1}\mathcal O_\ell$ such that $\chi_r(1+t_{rs})\neq\chi_s(1+t_{rs})$. Since the identity double coset contributes exactly one dimension, by Proposition~\ref{th2}, it suffices to show that every nonidentity double coset contributes trivially to~\eqref{eq:mackey}.

\smallskip
\noindent\emph{Case 1.}
Let
\begin{equation*}
g=
\begin{pmatrix}
0&0&1\\
0&1&0\\
1&0&0
\end{pmatrix}.
\end{equation*}
Since $g^{-1}=g$, one has $H_g=\To_3$. Take
$h=\diag(1+t_{13},1,1)$. Then
\begin{equation*}
g^{-1}hg=\diag(1,1,1+t_{13}).
\end{equation*}
Thus $\chi(h)=\chi_1(1+t_{13})$ and
$\chi^g(h)=\chi_3(1+t_{13})$, so $\chi(h)\neq\chi^g(h)$.

\smallskip
\noindent\emph{Case 2.} Let $g=g_{i,1}=\begin{pmatrix}0&1&0\\0&\varpi^i&1\\1&0&0\end{pmatrix}$, where $1\leq i\leq\ell$. For $X=\begin{pmatrix}a&d&f\\0&b&e\\0&0&c\end{pmatrix}\in\B_3$, a direct computation gives $$gXg^{-1}=\begin{pmatrix}b-e\varpi^i&e&0\\(b-c-e\varpi^i)\varpi^i&c+e\varpi^i&0\\d-f\varpi^i&f&a\end{pmatrix}.$$ Since $H_g=\B_3\cap g\B_3g^{-1}$, the above matrix belongs to $H_g$ if and only if $f=d=0$ and $b-c-e\varpi^i\in\varpi^{\ell-i}\mathcal O_\ell$. Writing $c=b-e\varpi^i+m\varpi^{\ell-i}$, we obtain $$H_g=\left\{\begin{pmatrix}b-e\varpi^i&e&0\\0&b+m\varpi^{\ell-i}&0\\0&0&a\end{pmatrix}:a,b\in\mathcal O_\ell^\times,\ e,m\in\mathcal O_\ell,\ b+m\varpi^{\ell-i}\in\mathcal O_\ell^\times\right\}.$$  For $h\in H_g$ of the above form, we have $$g^{-1}hg=\begin{pmatrix}a&0&0\\0&b&e\\0&0&b-e\varpi^i+m\varpi^{\ell-i}\end{pmatrix},$$ and hence $$\chi(h)=\chi_1(b-e\varpi^i)\chi_2(b+m\varpi^{\ell-i})\chi_3(a),\qquad \chi^g(h)=\chi_1(a)\chi_2(b)\chi_3(b-e\varpi^i+m\varpi^{\ell-i}).$$ By assumption, there exists $t_{13}\in\varpi^{\ell-1}\mathcal O_\ell$ such that $\chi_1(1+t_{13})\neq\chi_3(1+t_{13})$. Taking $a=1+t_{13}$, $b=1$, and $e=m=0$, we obtain $h=\diag(1,1,1+t_{13})\in H_g$, for which $$\chi(h)=\chi_3(1+t_{13})\neq\chi_1(1+t_{13})=\chi^g(h).$$ Thus $\chi|_{H_g}\neq\chi^g|_{H_g}$, and the double coset $\B_3g\B_3$ contributes trivially to~\eqref{eq:mackey}.

\smallskip
\noindent\emph{Case 3.} Let $g=g_{i,2}=\begin{pmatrix}0&0&1\\1&0&0\\\varpi^i&1&0\end{pmatrix}$, where $1\leq i\leq\ell$. For $X=\begin{pmatrix}a&d&f\\0&b&e\\0&0&c\end{pmatrix}\in\B_3$, a direct computation gives $$gXg^{-1}=\begin{pmatrix}c&0&0\\f&a-d\varpi^i&d\\e+f\varpi^i&(a-b-d\varpi^i)\varpi^i&b+d\varpi^i\end{pmatrix}.$$ Since $H_g=\B_3\cap g\B_3g^{-1}$, the above matrix belongs to $H_g$ if and only if $e=f=0$ and $a-b-d\varpi^i\in\varpi^{\ell-i}\mathcal O_\ell$. Writing $b=a-d\varpi^i-m\varpi^{\ell-i}$, we obtain $$H_g=\left\{\begin{pmatrix}c&0&0\\0&a-d\varpi^i&d\\0&0&a-m\varpi^{\ell-i}\end{pmatrix}:a,c\in\mathcal O_\ell^\times,\ d,m\in\mathcal O_\ell,\ a-m\varpi^{\ell-i}\in\mathcal O_\ell^\times\right\}.$$  For $h\in H_g$ of the above form, we have $$g^{-1}hg=\begin{pmatrix}a&d&0\\0&a-d\varpi^i-m\varpi^{\ell-i}&0\\0&0&c\end{pmatrix}.$$ Consequently, $$\chi(h)=\chi_1(c)\chi_2(a-d\varpi^i)\chi_3(a-m\varpi^{\ell-i}),\qquad \chi^g(h)=\chi_1(a)\chi_2(a-d\varpi^i-m\varpi^{\ell-i})\chi_3(c).$$ By assumption, there exists $t_{13}\in\varpi^{\ell-1}\mathcal O_\ell$ such that $\chi_1(1+t_{13})\neq\chi_3(1+t_{13})$. Taking $c=1+t_{13}$, $a=1$, and $d=m=0$, we obtain $h=\diag(1+t_{13},1,1)\in H_g$, for which $g^{-1}hg=\diag(1,1,1+t_{13})$. Hence, we have $$\chi(h)=\chi_1(1+t_{13})\neq\chi_3(1+t_{13})=\chi^g(h).$$ Thus $\chi|_{H_g}\neq\chi^g|_{H_g}$, and the double coset $\B_3g\B_3$ contributes trivially to~\eqref{eq:mackey}.

\smallskip
\noindent\emph{Case 4.} Let $g=g_{i,j,1}=\begin{pmatrix}1&0&0\\\varpi^i&0&1\\\varpi^j&1&0\end{pmatrix}$, where $1\leq i,j\leq\ell$. For $X=\begin{pmatrix}a&d&f\\0&b&e\\0&0&c\end{pmatrix}\in\B_3$, a direct computation gives $$gXg^{-1}=\begin{pmatrix}a-f\varpi^i-d\varpi^j&f&d\\(a-c-f\varpi^i-d\varpi^j)\varpi^i&c+f\varpi^i&d\varpi^i\\(a-b-f\varpi^i-d\varpi^j)\varpi^j-e\varpi^i&e+f\varpi^j&b+d\varpi^j\end{pmatrix}.$$ For this matrix to belong to $H_g$,  we have $(a-c-f\varpi^i-d\varpi^j)\varpi^i=0$, $(a-b-f\varpi^i-d\varpi^j)\varpi^j-e\varpi^i=0$, and $e+f\varpi^j=0$. The last equality gives $e=-f\varpi^j$, and substituting this into the second equality yields $$(a-b-d\varpi^j)\varpi^j=0.$$ Since the annihilator of $\varpi^r$ in $\mathcal O_\ell$ is $\varpi^{\ell-r}\mathcal O_\ell$, we obtain $a-b-d\varpi^j\in\varpi^{\ell-j}\mathcal O_\ell$ and $a-c-f\varpi^i-d\varpi^j\in\varpi^{\ell-i}\mathcal O_\ell$. Hence, we may write $b=a-d\varpi^j-\mu\varpi^{\ell-j}$ and $c=a-f\varpi^i-d\varpi^j-\lambda\varpi^{\ell-i}$ for some $\lambda,\mu\in\mathcal O_\ell$. Consequently, $H_g$ consists of all matrices of the form $$h=\begin{pmatrix}a-f\varpi^i-d\varpi^j&f&d\\0&a-d\varpi^j-\lambda\varpi^{\ell-i}&d\varpi^i\\0&0&a-\mu\varpi^{\ell-j}\end{pmatrix},$$ where $a\in\mathcal O_\ell^\times$, $d,f,\lambda,\mu\in\mathcal O_\ell$, and $a-d\varpi^j-\lambda\varpi^{\ell-i},\,a-\mu\varpi^{\ell-j}\in\mathcal O_\ell^\times$. For such an element $h\in H_g$, we have $$g^{-1}hg=\begin{pmatrix}a&d&f\\0&a-d\varpi^j-\mu\varpi^{\ell-j}&-f\varpi^j\\0&0&a-f\varpi^i-d\varpi^j-\lambda\varpi^{\ell-i}\end{pmatrix}.$$ Therefore, $$\chi(h)=\chi_1(a-f\varpi^i-d\varpi^j)\chi_2(a-d\varpi^j-\lambda\varpi^{\ell-i})\chi_3(a-\mu\varpi^{\ell-j}),$$ whereas $$\chi^g(h)=\chi_1(a)\chi_2(a-d\varpi^j-\mu\varpi^{\ell-j})\chi_3(a-f\varpi^i-d\varpi^j-\lambda\varpi^{\ell-i}).$$ Now choose $t_{23}\in\varpi^{\ell-1}\mathcal O_\ell$ such that $\chi_2(1+t_{23})\neq\chi_3(1+t_{23})$, and take $h=\diag(1,1+t_{23},1)$. Then $g^{-1}hg=\diag(1,1,1+t_{23})$. Thus $h\in H_g$, and $$\chi(h)=\chi_2(1+t_{23})\neq\chi_3(1+t_{23})=\chi^g(h).$$ Hence, we have $\chi|_{H_g}\neq\chi^g|_{H_g}$, and the corresponding double coset contributes trivially to~\eqref{eq:mackey}.

\smallskip
\noindent\emph{Case 5.} Let
$$
g=g_{i,j,2}=
\begin{pmatrix}
0&1&0\\
1&0&0\\
\varpi^i&\varpi^j&1
\end{pmatrix},
\qquad
g^{-1}=
\begin{pmatrix}
0&1&0\\
1&0&0\\
-\varpi^j&-\varpi^i&1
\end{pmatrix},
$$
where $1\leq i,j\leq\ell$. We obtain
$$
gXg^{-1}=
\begin{pmatrix}
b-e\varpi^j&-e\varpi^i&e\\
d-f\varpi^j&a-f\varpi^i&f\\
(b-c-f\varpi^i-e\varpi^j)\varpi^j+d\varpi^i&
(a-c-f\varpi^i-e\varpi^j)\varpi^i&
c+f\varpi^i+e\varpi^j
\end{pmatrix}.
$$
For this matrix to belong to $H_g$, its entries below the diagonal must vanish in $\mathcal O_\ell$. Thus
\begin{align*}
d-f\varpi^j&=0,\\
(b-c-f\varpi^i-e\varpi^j)\varpi^j+d\varpi^i&=0,\\
(a-c-f\varpi^i-e\varpi^j)\varpi^i&=0.
\end{align*}
The first equality gives $d=f\varpi^j$. Substituting this into the second equality yields
$
(b-c-e\varpi^j)\varpi^j=0.
$
Consequently, $b-c-e\varpi^j\in\varpi^{\ell-j}\mathcal O_\ell$ and $a-c-f\varpi^i-e\varpi^j\in\varpi^{\ell-i}\mathcal O_\ell$. Hence, for some $\lambda,\mu\in\mathcal O_\ell$, we may write
$
b=c+e\varpi^j+\lambda\varpi^{\ell-j},
a=c+f\varpi^i+e\varpi^j+\mu\varpi^{\ell-i}.
$
It follows that $H_g$ consists of all matrices of the form
$$
h=
\begin{pmatrix}
c+\lambda\varpi^{\ell-j}&-e\varpi^i&e\\
0&c+e\varpi^j+\mu\varpi^{\ell-i}&f\\
0&0&c+f\varpi^i+e\varpi^j
\end{pmatrix},
$$
where $c\in\mathcal O_\ell^\times$, $e,f,\lambda,\mu\in\mathcal O_\ell$, and
$$
c+\lambda\varpi^{\ell-j},\quad
c+e\varpi^j+\mu\varpi^{\ell-i}
\in\mathcal O_\ell^\times.
$$
For such an element $h\in H_g$, we have
$$
g^{-1}hg=
\begin{pmatrix}
c+f\varpi^i+e\varpi^j+\mu\varpi^{\ell-i}&f\varpi^j&f\\
0&c+e\varpi^j+\lambda\varpi^{\ell-j}&e\\
0&0&c
\end{pmatrix}.
$$
Therefore,
\begin{align*}
\chi(h)
&=\chi_1(c+\lambda\varpi^{\ell-j})
\chi_2(c+e\varpi^j+\mu\varpi^{\ell-i})
\chi_3(c+f\varpi^i+e\varpi^j),\\
\chi^g(h)
&=\chi_1(c+f\varpi^i+e\varpi^j+\mu\varpi^{\ell-i})
\chi_2(c+e\varpi^j+\lambda\varpi^{\ell-j})
\chi_3(c).
\end{align*}
Now choose $t_{12}\in\varpi^{\ell-1}\mathcal O_\ell$ such that $\chi_1(1+t_{12})\neq\chi_2(1+t_{12})$, and take $h=\diag(1+t_{12},1,1)$. Then
$
=\diag(1,1+t_{12},1).
$ Thus $h\in H_g$, and
$$
\chi(h)=\chi_1(1+t_{12})
\neq\chi_2(1+t_{12})=\chi^g(h).
$$
Hence, $\chi|_{H_g}\neq\chi^g|_{H_g}$, and the corresponding double coset contributes trivially to~\eqref{eq:mackey}.

\smallskip
\noindent\emph{Case 6.} Let
\begin{equation*}
g=g_{i,j,k}=
\begin{pmatrix}
1&0&0\\
\varpi^i&1&0\\
x\varpi^j&\varpi^k&1
\end{pmatrix},
\qquad x\in\mathcal O_\ell^\times,
\end{equation*}
where $1\leq i,j,k\leq\ell$. Then
\begin{equation*}
g^{-1}=
\begin{pmatrix}
1&0&0\\
-\varpi^i&1&0\\
\varpi^{i+k}-x\varpi^j&-\varpi^k&1
\end{pmatrix}.
\end{equation*}
For $X=\begin{pmatrix}a&d&f\\0&b&e\\0&0&c\end{pmatrix}\in\B_3$, a direct computation gives
\begin{equation*}
gXg^{-1}=
\begin{pmatrix}
a_{11}&d-f\varpi^k&f\\
a_{21}&a_{22}&e+f\varpi^i\\
a_{31}&a_{32}&a_{33}
\end{pmatrix},
\end{equation*}
where
\begin{align}
a_{11}
&=a-xf\varpi^j+f\varpi^{i+k}-d\varpi^i,\notag\\
a_{22}
&=b-e\varpi^k-f\varpi^{i+k}+d\varpi^i,\notag\\
a_{33}
&=c+xf\varpi^j+e\varpi^k,\notag\\
a_{21}
&=(a-b)\varpi^i-d\varpi^{2i}
+(e+f\varpi^i)(\varpi^{i+k}-x\varpi^j),\notag\\
a_{31}
&=(a-c)x\varpi^j+(c-b)\varpi^{i+k}
-xd\varpi^{i+j}\notag\\
&\qquad +(xf\varpi^j+e\varpi^k)
(\varpi^{i+k}-x\varpi^j),\notag\\
a_{32}
&=(b-c)\varpi^k+xd\varpi^j
-xf\varpi^{j+k}-e\varpi^{2k}.
\label{eq:case-six-entries}
\end{align}
For $h=gXg^{-1}$ to belong to $H_g$, we have $a_{21}=a_{31}=a_{32}=0$. If $i=j=k=\ell$, then $g=I_3$, corresponding to the identity double coset. We therefore assume that $\min\{i,j,k\}\leq\ell-1$. We distinguish the following three subcases according to the relative values of $i,j,k$.

\smallskip
\noindent\emph{Case 6(a): $j\leq i$ and $j\leq k$.}
Choose $t\in\varpi^{\ell-1}\mathcal O_\ell$ such that $\chi_1(1+t)\neq\chi_3(1+t)$. Since $x$ is a unit and $j<\ell$, we can choose $f\in\varpi^{\ell-1-j}\mathcal O_\ell$ satisfying $xf\varpi^j=t$. Take
\begin{equation*}
X=
\begin{pmatrix}
1&0&f\\
0&1&0\\
0&0&1
\end{pmatrix}.
\end{equation*}
Since $i,k\geq j\geq1$, the elements $f\varpi^{i+k}$, $f\varpi^{i+j}$, $f\varpi^{2j}$, and $f\varpi^{j+k}$ all vanish in $\mathcal O_\ell$. It follows from~\eqref{eq:case-six-entries} that $a_{21}=a_{31}=a_{32}=0$, and hence
\begin{equation*}
h=gXg^{-1}=
\begin{pmatrix}
1-t&-f\varpi^k&f\\
0&1&f\varpi^i\\
0&0&1+t
\end{pmatrix}
\in H_g.
\end{equation*}
Since $g^{-1}hg=X$ is upper unitriangular, we have $\chi^g(h)=\chi(X)=1$. On the other hand, since $t^2=0$ in $\mathcal O_\ell$, we have $1-t=(1+t)^{-1}$. Consequently,
\begin{equation*}
\chi(h)=\chi_1(1-t)\chi_3(1+t)
=\frac{\chi_3(1+t)}{\chi_1(1+t)}\neq1.
\end{equation*}
Thus $\chi(h)\neq\chi^g(h)$.

\smallskip
\noindent\emph{Case 6(b): $i<j$ and $i\leq k$.}
Choose $t\in\varpi^{\ell-1}\mathcal O_\ell$ such that $\chi_1(1+t)\neq\chi_2(1+t)$. Since $i<\ell$, there exists $d\in\varpi^{\ell-1-i}\mathcal O_\ell$ satisfying $d\varpi^i=t$. Put
\begin{equation*}
X=
\begin{pmatrix}
1&d&0\\
0&1&0\\
0&0&1
\end{pmatrix}.
\end{equation*}
As $j>i\geq1$, the elements $d\varpi^{2i}$, $d\varpi^j$, and $d\varpi^{i+j}$ vanish in $\mathcal O_\ell$. Therefore, the three lower entries in~\eqref{eq:case-six-entries} are zero, giving
\begin{equation*}
h=gXg^{-1}=
\begin{pmatrix}
1-t&d&0\\
0&1+t&0\\
0&0&1
\end{pmatrix}
\in H_g.
\end{equation*}
Again, $\chi^g(h)=\chi(X)=1$. Since $1-t=(1+t)^{-1}$, we obtain
\begin{equation*}
\chi(h)=\chi_1(1-t)\chi_2(1+t)
=\frac{\chi_2(1+t)}{\chi_1(1+t)}\neq1.
\end{equation*}
Hence, we have $\chi(h)\neq\chi^g(h)$.

\smallskip
\noindent\emph{Case 6(c): $k<i$ and $k<j$.}
Choose $t\in\varpi^{\ell-1}\mathcal O_\ell$ such that $\chi_2(1+t)\neq\chi_3(1+t)$. Since $k<\ell$, choose $e\in\varpi^{\ell-1-k}\mathcal O_\ell$ satisfying $e\varpi^k=t$. Consider
\begin{equation*}
X=
\begin{pmatrix}
1&0&0\\
0&1&e\\
0&0&1
\end{pmatrix}.
\end{equation*}
Since $i,j>k\geq1$, the elements $e\varpi^j$, $e\varpi^{i+k}$, and $e\varpi^{2k}$ vanish in $\mathcal O_\ell$. Thus the three lower entries in~\eqref{eq:case-six-entries} are zero, and we obtain
\begin{equation*}
h=gXg^{-1}=
\begin{pmatrix}
1&0&0\\
0&1-t&e\\
0&0&1+t
\end{pmatrix}
\in H_g.
\end{equation*}
Since $g^{-1}hg=X$, we have $\chi^g(h)=1$, whereas
\begin{equation*}
\chi(h)=\chi_2(1-t)\chi_3(1+t)
=\frac{\chi_3(1+t)}{\chi_2(1+t)}\neq1.
\end{equation*}
Therefore, $\chi(h)\neq\chi^g(h)$.

The above three subcases exhaust all possibilities for $\min\{i,j,k\}<\ell$. In each case, we have constructed an element $h\in H_g$ such that $\chi(h)\neq\chi^g(h)$. Hence, every nonidentity double coset in this family contributes trivially to~\eqref{eq:mackey}. 

Combining Cases 1--6, we conclude that the identity double coset is the only one contributing to the self-intertwining space. Consequently,
\begin{equation*}
\dim\End_{\G_3}\bigl(\mathcal B(\chi_1,\chi_2,\chi_3)\bigr)=1.
\end{equation*}
Thus Proposition~\ref{th2} implies that $\mathcal B(\chi_1,\chi_2,\chi_3)$ is irreducible.

\smallskip
\noindent\emph{Necessity.} Suppose that $\mathcal B(\chi_1,\chi_2,\chi_3)$ is irreducible. Assume, on the contrary, that two of the inducing characters have the same restriction to $1+\varpi^{\ell-1}\mathcal O_\ell$. Put $\delta=\varpi^{\ell-1}$. Since $\ell\geq2$, we have $\delta\neq0$ and $\delta^2=0$ in $\mathcal O_\ell$. We show that there exists a nonidentity double coset for which $\chi|_{H_g}=\chi^g|_{H_g}$, contradicting irreducibility.

\smallskip
\noindent\emph{Case N1.} Suppose that $\chi_1$ and $\chi_2$ have the same restriction to $1+\varpi^{\ell-1}\mathcal O_\ell$. Take $g=I_3+\delta E_{21}$, so that $g^{-1}=I_3-\delta E_{21}$. For
\begin{equation*}
h=
\begin{pmatrix}
a&d&f\\
0&b&e\\
0&0&c
\end{pmatrix}\in\B_3,
\end{equation*}
a direct computation gives
\begin{equation*}
g^{-1}hg=
\begin{pmatrix}
a+\delta d&d&f\\
\delta(b-a)&b-\delta d&e-\delta f\\
0&0&c
\end{pmatrix}.
\end{equation*}
Thus $h\in H_g$ if and only if $\delta(b-a)=0$. For such an element, put $\eta=\delta da^{-1}$. Since $\delta(b-a)=0$ and $a,b$ are units, we also have $\eta=\delta db^{-1}$. Consequently,
\begin{equation*}
\frac{\chi^g(h)}{\chi(h)}
=\chi_1(1+\eta)\chi_2(1-\eta)
=(\chi_1\chi_2^{-1})(1+\eta)=1,
\end{equation*}
where we use $\eta^2=0$, so that $1-\eta=(1+\eta)^{-1}$. The last equality follows from the assumption that $\chi_1$ and $\chi_2$ agree on $1+\varpi^{\ell-1}\mathcal O_\ell$. Hence, $\chi^g(h)=\chi(h)$ for every $h\in H_g$.

\smallskip
\noindent\emph{Case N2.} Suppose that $\chi_1$ and $\chi_3$ have the same restriction to $1+\varpi^{\ell-1}\mathcal O_\ell$. Choose $g=I_3+\delta E_{31}$. Then $g^{-1}=I_3-\delta E_{31}$, and
\begin{equation*}
g^{-1}hg=
\begin{pmatrix}
a+\delta f&d&f\\
\delta e&b&e\\
\delta(c-a)&-\delta d&c-\delta f
\end{pmatrix}.
\end{equation*}
Therefore, $h\in H_g$ if and only if $\delta e=\delta(c-a)=\delta d=0$. Put $\eta=\delta fa^{-1}$. Since $\delta(c-a)=0$, we obtain $\eta=\delta fc^{-1}$. It follows that
\begin{equation*}
\frac{\chi^g(h)}{\chi(h)}
=\chi_1(1+\eta)\chi_3(1-\eta)
=(\chi_1\chi_3^{-1})(1+\eta)=1.
\end{equation*}
Thus $\chi^g(h)=\chi(h)$ for every $h\in H_g$.

\smallskip
\noindent\emph{Case N3.} Suppose that $\chi_2$ and $\chi_3$ have the same restriction to $1+\varpi^{\ell-1}\mathcal O_\ell$. Take $g=I_3+\delta E_{32}$, so that $g^{-1}=I_3-\delta E_{32}$. A direct calculation yields
\begin{equation*}
g^{-1}hg=
\begin{pmatrix}
a&d+\delta f&f\\
0&b+\delta e&e\\
0&\delta(c-b)&c-\delta e
\end{pmatrix}.
\end{equation*}
Hence, $h\in H_g$ if and only if $\delta(c-b)=0$. Let $\eta=\delta eb^{-1}$. The condition $\delta(c-b)=0$ implies $\eta=\delta ec^{-1}$. Therefore,
\begin{equation*}
\frac{\chi^g(h)}{\chi(h)}
=\chi_2(1+\eta)\chi_3(1-\eta)
=(\chi_2\chi_3^{-1})(1+\eta)=1.
\end{equation*}
Thus $\chi^g(h)=\chi(h)$ for every $h\in H_g$.

In each case, $g\notin\B_3$, since $\delta\neq0$. Hence, the corresponding nonidentity double coset contributes an additional dimension to~\eqref{eq:mackey}. Consequently,
\begin{equation*}
\dim\End_{\G_3}\bigl(\mathcal B(\chi_1,\chi_2,\chi_3)\bigr)\geq2.
\end{equation*}
By Proposition~\ref{th2}, the representation $\mathcal B(\chi_1,\chi_2,\chi_3)$ is reducible, contradicting our assumption. Therefore, the restrictions of $\chi_1,\chi_2,\chi_3$ to $1+\varpi^{\ell-1}\mathcal O_\ell$ must be pairwise distinct. This proves necessity and completes the proof.

\end{proof}

\subsection{Equal inducing characters}\label{ssec:equal-characters}

We now study the induced representations when all inducing characters coincide. For $n=3$, the complete irreducible decomposition is known from the work of Onn and Singla~\cite{OnnSingla}. They consider $G=\GL_3(\mathcal O)$, its upper triangular subgroup $B=B(\mathcal O)$, and the representation $V=\Ind_B^G1$, which arises as the restriction of an unramified principal series representation of $\GL_3(\F)$ to $\GL_3(\mathcal O)$. In~\cite[Corollary~6.4]{OnnSingla}, they obtain the complete irreducible decomposition of $V$, with the multiplicities described in~\cite[Proposition~6.3]{OnnSingla} and the dimensions of the irreducible constituents given in~\cite[(6.2)]{OnnSingla}. For $K^\ell=\ker(\GL_3(\mathcal O)\to\GL_3(\mathcal O_\ell))$, the space of $K^\ell$-fixed vectors satisfies $V^{K^\ell}\cong\Ind_{\B_3}^{\G_3}1$. Thus their results also determine the complete decomposition of $\Ind_{\B_3}^{\G_3}1$ for every $\ell\geq1$. Since $\mathcal B(\chi,\chi,\chi)\cong(\chi\circ\det)\otimes\Ind_{\B_3}^{\G_3}1$, the decomposition for arbitrary equal inducing characters follows by twisting. We extend the study of multiplicities to arbitrary $n$ by identifying an explicit irreducible constituent of $\mathcal B(\chi,\ldots,\chi)$ and determining its exact multiplicity. Our approach uses the natural action of $\GL_n(\mathbb F_q)$ on projective space and provides a uniform proof of the failure of multiplicity-freeness for every $n\geq3$.

\begin{proof}[Proof of Theorem~\ref{prop:equal-characters-general-degree}]
Since the character of $\B_n$ determined by $(\chi,\ldots,\chi)$ is the restriction of $\chi\circ\det$ to $\B_n$, the tensor identity for induced representations gives
\begin{equation}\label{eq:general-equal-twist}
\mathcal B(\chi,\ldots,\chi)
\cong(\chi\circ\det)\otimes\Ind_{\B_n}^{\G_n}1.
\end{equation}
Thus it suffices to identify an irreducible constituent of $\Ind_{\B_n}^{\G_n}1$ and determine its multiplicity.
Let $\rho_n:\G_n\to\overline{\G}_n=\GL_n(\mathbb F_q)$ be the reduction homomorphism, and put $\overline{\B}_n=\rho_n(\B_n)$. Consider the projective space $\Omega_n=\mathbb P^{n-1}(\mathbb F_q)$ and the corresponding permutation representation $\mathbb C[\Omega_n]$ of $\overline{\G}_n$. Define
\begin{equation*}
V_n=\left\{f\in\mathbb C[\Omega_n]:\sum_{L\in\Omega_n}f(L)=0\right\}.
\end{equation*}
Then $\mathbb C[\Omega_n]=\mathbb C\mathbf1_{\Omega_n}\oplus V_n$, where $\mathbf1_{\Omega_n}$ denotes the constant function with value one.
The action of $\overline{\G}_n$ on $\Omega_n$ is doubly transitive, since any two ordered pairs of distinct lines can be carried into one another by an invertible linear transformation. Consequently, the diagonal action of $\overline{\G}_n$ on $\Omega_n\times\Omega_n$ has exactly two orbits, namely the diagonal and its complement. It follows that
\begin{equation*}
\dim\End_{\overline{\G}_n}\mathbb C[\Omega_n]=2.
\end{equation*}
Since the action on $\Omega_n$ is transitive, the trivial representation occurs in $\mathbb C[\Omega_n]$ with multiplicity one. By complete reducibility and Schur's lemma, $V_n$ is therefore irreducible. Moreover,
\begin{equation}\label{eq:general-projective-dimension}
\dim V_n=|\Omega_n|-1
=\frac{q^n-1}{q-1}-1
=q+q^2+\cdots+q^{n-1}.
\end{equation}
We next determine the dimension of the space of $\overline{\B}_n$-fixed vectors in $V_n$. For $1\leq r\leq n$, define
\begin{equation*}
\Omega_{n,r}=\left\{\left\langle(a_1,\ldots,a_{r-1},1,0,\ldots,0)^{\mathsf T}\right\rangle:a_1,\ldots,a_{r-1}\in\mathbb F_q\right\}.
\end{equation*}
Every line in $\Omega_n$ has a unique representative whose last nonzero coordinate is $1$. Hence, we have
\begin{equation*}
\Omega_n=\bigsqcup_{r=1}^n\Omega_{n,r},
\qquad
|\Omega_{n,r}|=q^{r-1}.
\end{equation*}
The position of the last nonzero coordinate is preserved by $\overline{\B}_n$. Moreover, the upper triangular unipotent subgroup acts transitively on each $\Omega_{n,r}$. Indeed, for
\begin{equation*}
v=(a_1,\ldots,a_{r-1},1,0,\ldots,0)^{\mathsf T},
\qquad
v'=(a'_1,\ldots,a'_{r-1},1,0,\ldots,0)^{\mathsf T},
\end{equation*}
the matrix
\begin{equation*}
u=I_n+\sum_{i=1}^{r-1}(a'_i-a_i)E_{ir}
\end{equation*}
belongs to $\overline{\B}_n$ and satisfies $uv=v'$. Thus $\Omega_{n,1},\ldots,\Omega_{n,n}$ are precisely the $\overline{\B}_n$-orbits on $\Omega_n$.
It follows that every $\overline{\B}_n$-fixed function on $\Omega_n$ is constant on each $\Omega_{n,r}$ and hence has a unique expression of the form $\sum_{r=1}^na_r\mathbf1_{\Omega_{n,r}}$. Such a function belongs to $V_n$ if and only if
\begin{equation*}
a_1+qa_2+q^2a_3+\cdots+q^{n-1}a_n=0.
\end{equation*}
Since this is one nontrivial linear relation among $n$ variables, we obtain
\begin{equation}\label{eq:general-projective-borel-fixed}
\dim V_n^{\overline{\B}_n}=n-1.
\end{equation}
Now inflate $V_n$ along $\rho_n$ and denote the resulting representation of $\G_n$ by $\widetilde V_n$. Since $\rho_n$ is surjective and $V_n$ is irreducible, $\widetilde V_n$ is also irreducible, with $\dim\widetilde V_n=q+q^2+\cdots+q^{n-1}$. Furthermore, since $\rho_n(\B_n)=\overline{\B}_n$, equation~\eqref{eq:general-projective-borel-fixed} gives
$
\dim\widetilde V_n^{\B_n}
=\dim V_n^{\overline{\B}_n}
=n-1.
$
Applying Frobenius reciprocity, we obtain
\begin{align*}
\dim\Hom_{\G_n}\left(\Ind_{\B_n}^{\G_n}1,\widetilde V_n\right)
&=\dim\Hom_{\B_n}\left(1,\left.\widetilde V_n\right|_{\B_n}\right)=\dim\widetilde V_n^{\B_n}=n-1.
\end{align*}
Since $\G_n$ is finite and $\widetilde V_n$ is irreducible, this dimension is precisely the multiplicity of $\widetilde V_n$ in $\Ind_{\B_n}^{\G_n}1$. Therefore, $\widetilde V_n$ occurs with multiplicity exactly $n-1$.
Finally, by~\eqref{eq:general-equal-twist}, the representation $(\chi\circ\det)\otimes\widetilde V_n$ is an irreducible constituent of $\mathcal B(\chi,\ldots,\chi)$ of dimension $q+q^2+\cdots+q^{n-1}$, occurring with multiplicity exactly $n-1$. In particular, for $n\geq3$, this multiplicity is at least two, so $\mathcal B(\chi,\ldots,\chi)$ is not multiplicity-free. Consequently, the associated Gelfand triple property fails, and taking $\chi=1$ shows that $(\G_n,\B_n)$ is not a Gelfand pair.
\end{proof}

The preceding argument applies uniformly to every $n\geq2$ and $\ell\geq1$. Specializing to $n=3$ gives the following consequence.

\begin{corollary}\label{prop:equal-characters-not-mf}
Let $\ell\geq1$, and let $\chi$ be a character of $\mathcal O_\ell^\times$. Then $\mathcal B(\chi,\chi,\chi)$ has an irreducible constituent of dimension $q^2+q$ occurring with multiplicity exactly two. Consequently, $\mathcal B(\chi,\chi,\chi)$ is not multiplicity-free, and the triple $(\G_3,\B_3,\chi)$ is not a Gelfand triple.
\end{corollary}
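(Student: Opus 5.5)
The corollary is the case $n=3$ of the proof of Theorem~\ref{prop:equal-characters-general-degree}, so the plan is simply to specialize that argument and check that nothing in it needs $\ell\geq2$.

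First, I will reduce to trivial inducing characters. The character of $\B_3$ attached to $(\chi,\chi,\chi)$ is the restriction of $\chi\circ\det$, so by \eqref{eq:general-equal-twist}
\[
\mathcal B(\chi,\chi,\chi)\cong(\chi\circ\det)\otimes\Ind_{\B_3}^{\G_3}1 .
\]
Twisting by a one-dimensional character preserves irreducibility and dimension, and it is a bijection on irreducibles. Hence multiplicities in $\Ind_{\B_3}^{\G_3}1$ carry over unchanged to $\mathcal B(\chi,\chi,\chi)$.

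Second, I will produce the constituent. Take $\Omega_3=\mathbb P^2(\mathbb F_q)$ and let $V_3$ be the augmentation submodule of $\mathbb C[\Omega_3]$.
\begin{itemize}
\item Because $\GL_3(\mathbb F_q)$ acts doubly transitively on $\Omega_3$, we get $\dim\End\mathbb C[\Omega_3]=2$, so $V_3$ is irreducible.
\item Its dimension is $q^2+q$ by \eqref{eq:general-projective-dimension}.
\item The $\overline{\B}_3$-orbits on $\Omega_3$ are $\Omega_{3,1},\Omega_{3,2},\Omega_{3,3}$, of sizes $1,q,q^2$. A $\overline{\B}_3$-fixed vector in $V_3$ therefore has the form $a_1\mathbf 1_{\Omega_{3,1}}+a_2\mathbf 1_{\Omega_{3,2}}+a_3\mathbf 1_{\Omega_{3,3}}$ subject to the single relation $a_1+qa_2+q^2a_3=0$.
\end{itemize}
This gives $\dim V_3^{\overline{\B}_3}=2$.

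Third, I will pass to $\G_3$. Inflate $V_3$ along the surjection $\rho_3$ to get $\widetilde V_3$; it is still irreducible. Since $\rho_3(\B_3)=\overline{\B}_3$, the $\B_3$-fixed vectors of $\widetilde V_3$ are exactly the $\overline{\B}_3$-fixed vectors of $V_3$. Frobenius reciprocity then gives
\[
\dim\Hom_{\G_3}\bigl(\Ind_{\B_3}^{\G_3}1,\widetilde V_3\bigr)=2,
\]
so $(\chi\circ\det)\otimes\widetilde V_3$ is a constituent of $\mathcal B(\chi,\chi,\chi)$ of dimension $q^2+q$ with multiplicity exactly two. Multiplicity two means $\mathcal B(\chi,\chi,\chi)$ is not multiplicity-free, which is the failure of the Gelfand triple property by definition. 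Equivalently, Proposition~\ref{th2} shows $\End_{\G_3}\mathcal B(\chi,\chi,\chi)$ is noncommutative.

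The only point needing care is the case $\ell=1$. There $\rho_3$ is the identity and $\B_3=\overline{\B}_3$, and every step above goes through verbatim. In particular none of the steps uses the hypothesis $\ell\geq2$ from Theorem~\ref{thm:irreducibility}, so I do not expect any real obstacle.
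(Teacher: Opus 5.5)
Your proposal is correct and follows the paper's own argument: the paper proves this corollary by specializing the projective-space construction from the proof of Theorem~\ref{prop:equal-characters-general-degree} to $n=3$, as you do (twist reduction, irreducibility of $V_3$ via double transitivity, the three $\overline{\B}_3$-orbits giving $\dim V_3^{\overline{\B}_3}=2$, then inflation and Frobenius reciprocity). Your remark that no step uses $\ell\geq2$ also matches the paper's observation that the argument applies uniformly for all $\ell\geq1$.
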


\begin{proof}
Taking $n=3$ in the construction given in the proof of Theorem~\ref{prop:equal-characters-general-degree}, we obtain an irreducible constituent of dimension $q^2+q$ with multiplicity $2$, which proves the assertion.
\end{proof}

For $n=2$, the pair $(\G_2,\B_2)$ is a Gelfand pair. The multiplicity-free decomposition of $\mathcal B(\chi,\chi)$ was established by the author and Verma~\cite[Lemma~3.1]{premGL2}. Furthermore, Gupta and Singla~\cite[Theorem 1.1]{GuptaSingla} proved the stronger result that $(\G_2,\B_2)$ is a strong Gelfand pair when the residue characteristic is odd. In contrast, our result shows that for every $n\geq3$, the representation $\mathcal B(\chi,\ldots,\chi)$ is not multiplicity-free. More precisely, the irreducible constituent arising from the projective-space representation has dimension $q+q^2+\cdots+q^{n-1}$ and occurs with multiplicity exactly $n-1$. Thus the multiplicity-free property in degree two does not extend to higher degrees. For $n=3$, this agrees with the complete decomposition obtained by Onn and Singla~\cite[Corollary~6.4]{OnnSingla}, while our projective-space approach establishes the failure of multiplicity-freeness uniformly for all $n\geq3$ and $\ell\geq1$.

\section{Two maximal relative conductors}\label{sec:separated-principal-series}

Assume throughout this section that $c(\chi_1\chi_3^{-1})=c(\chi_2\chi_3^{-1})=\ell$, while $c(\chi_1\chi_2^{-1})$ is arbitrary. We first determine the $n=2$ decomposition needed on the Levi subgroup of the standard parabolic of type $(2,1)$, then use induction through this parabolic to obtain the decomposition for $\G_3$, and finally compute the constituent dimensions and specialize to $\chi_1=\chi_2$.

\subsection{The $n=2$ decomposition}\label{ssec:degree-two}

The required Levi representation is $\mathcal B(\chi_1,\chi_2)$. We use the irreducibility criterion and the endomorphism-algebra calculation established in \cite[Theorems~1.1 and~3.2]{premGL2} to determine its irreducible constituents and their dimensions.

\begin{lemma}\label{lem:rank-two-decomposition}
Let $\chi_1,\chi_2$ be characters of $\mathcal O_\ell^\times$, put $\mathcal B(\chi_1,\chi_2)=\Ind_{\B_2}^{\G_2}(\chi_1\otimes\chi_2)$, and let $m=c(\chi_1\chi_2^{-1})$. Then $\mathcal B(\chi_1,\chi_2)$ is multiplicity-free with exactly $\ell-m+1$ irreducible constituents. If $m\geq1$, their dimensions are $q^{m-1}(q+1)$ and $q^{j-2}(q^2-1)$ for $m<j\leq\ell$. If $m=0$, their dimensions are $1$, $q$, and $q^{j-2}(q^2-1)$ for $2\leq j\leq\ell$.
\end{lemma}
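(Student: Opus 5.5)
The plan is to prove the lemma through the endomorphism algebra. Multiplicity-freeness and the number of constituents come from the $\G_2$ analogue of the Mackey computation in Theorem~\ref{thm:irreducibility}, as carried out in \cite[Theorems~1.1 and~3.2]{premGL2}. The dimensions are then recovered from a filtration by $\Ind_{\B_2}^{\G_2}$ from congruence-type subgroups.

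\emph{Step 1: double cosets and the endomorphism algebra.} The double cosets $\B_2\backslash\G_2/\B_2$ are represented by
$$w=\begin{pmatrix}0&1\\1&0\end{pmatrix},\qquad u_k=\begin{pmatrix}1&0\\\varpi^k&1\end{pmatrix}\quad(1\leq k\leq\ell),$$
with $u_\ell=I_2$. For $g=u_k$, the computation in Case~N1 shows that
$$H_g=\{h\in\B_2:\varpi^k(b-a)=0\}.$$
On this group the ratio $\chi^g/\chi$ equals $(\chi_1\chi_2^{-1})(1+\eta)$, where $\eta$ runs over $\varpi^k\mathcal O_\ell$. Hence $u_k$ contributes exactly when $\chi_1\chi_2^{-1}$ is trivial on $U^k$, that is, when $k\geq m$. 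For $g=w$ we have $H_w=\To_2$, so $w$ contributes only when $\chi_1=\chi_2$, that is, when $m=0$.

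Counting contributions for $m\geq1$ gives the $\ell-m+1$ cosets $u_m,\dots,u_\ell$. For $m=0$ it gives $u_1,\dots,u_\ell$ together with $w$, which is $\ell+1$ cosets. In both cases the total is $\ell-m+1$.

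\emph{Step 2: commutativity.} By Proposition~\ref{th2}, multiplicity-freeness follows once $\End_{\G_2}\mathcal B(\chi_1,\chi_2)$ is shown to be commutative. The approach is Gelfand's trick. The transpose-inverse involution composed with conjugation by $w$ preserves $\B_2$ and $\chi$, and it fixes each representative double coset. The main obstacle is checking that $u_k$ and $w$ are fixed up to $\B_2$; since this is the content of \cite[Theorem~3.2]{premGL2}, I would cite it. The number of constituents then equals $\dim\End=\ell-m+1$.

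\emph{Step 3: dimensions.} For $m\leq j\leq\ell$, let
$$K_j=\B_2\,\Gamma_j,\qquad \Gamma_j=\ker(\G_2\to\GL_2(\mathcal O_j)),$$
where the factor $\Gamma_j$ is intersected with the lower-left entry condition. Concretely, $K_j$ is the subgroup of matrices whose lower-left entry lies in $\varpi^j\mathcal O_\ell$. The character $\chi$ extends to $K_j$ whenever $j\geq m$. Set $W_j=\Ind_{K_j}^{\G_2}\tilde\chi$.

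These give a chain $W_m\subset W_{m+1}\subset\cdots\subset W_\ell=\mathcal B(\chi_1,\chi_2)$ of multiplicity-free representations, and the index is
$$[\G_2:K_j]=q^{j-1}(q+1)\quad\text{for } j\geq1.$$
The constituents are therefore $W_m$ together with the successive quotients $W_j/W_{j-1}$, whose dimension is
$$q^{j-1}(q+1)-q^{j-2}(q+1)=q^{j-2}(q^2-1)\qquad(j>m).$$
For $m\geq1$, Step~1 gives $\dim\End W_m=1$ (only $u_m$ survives relative to $K_m$), so $W_m$ is irreducible of dimension $q^{m-1}(q+1)$. The quotients are irreducible because their number matches the count $\ell-m+1$ from Step~1.

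For $m=0$, take $W_0=\chi_1\circ\det$ of dimension $1$. Then $W_1/W_0$ is the Steinberg-type piece of dimension $q$, and the remaining pieces are as above with $2\leq j\leq\ell$. Finally, the dimensions sum correctly to $[\G_2:\B_2]=q^{\ell-1}(q+1)$.
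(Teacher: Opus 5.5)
\emph{Overall:} your Step~3 is the paper's argument, but Step~2, which you rely on for multiplicity-freeness, fails as stated when $\chi_1\neq\chi_2$. It is also unnecessary.

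\emph{What matches.} The paper first twists to $\mathcal B(\mu,1)$ with $\mu=\chi_1\chi_2^{-1}$. It uses the same chain $W_m\subset\cdots\subset W_\ell$ of inflated $\Ind_{\B_2(\mathcal O_j)}^{\GL_2(\mathcal O_j)}(\mu_j\otimes1)$, which is your $\Ind_{K_j}^{\G_2}\tilde\chi$ up to twist. It splits the chain and compares with $\dim\End_{\G_2}\mathcal B(\mu,1)=\ell-m+1$. That dimension is cited from \cite[Theorem~3.2]{premGL2} rather than recomputed as in your Step~1.

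\emph{Why Step~2 fails.} The anti-involution relevant to Gelfand's trick is $g\mapsto wg^{\mathsf T}w^{-1}$, i.e.\ your automorphism followed by inversion. It sends $\diag(a,b)$ to $\diag(b,a)$, so it carries $\chi_1\otimes\chi_2$ to $\chi_2\otimes\chi_1$. It is compatible with $\chi$ only when $\chi_1=\chi_2$, i.e.\ $m=0$. For $m\geq1$ the argument does not go through. Deferring to \cite[Theorem~3.2]{premGL2} does not help, since the paper uses that result only for $\dim\End$, not for commutativity.

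\emph{The fix.} After Maschke, $\mathcal B(\chi_1,\chi_2)\cong Q_m\oplus\cdots\oplus Q_\ell$ with every $Q_j\neq0$, so
\[
\ell-m+1=\sum_{i,j}\dim\Hom_{\G_2}(Q_i,Q_j)\geq\sum_j\dim\End_{\G_2}(Q_j)\geq\ell-m+1 .
\]
This forces each $Q_j$ to be irreducible and $\Hom_{\G_2}(Q_i,Q_j)=0$ for $i\neq j$. That gives irreducibility and multiplicity-freeness at once, and it is exactly how the paper concludes.

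If you still want Gelfand's trick, it can be rescued for $m\geq1$:
\begin{itemize}
\item the Hecke algebra is supported on $\bigcup_{k\geq m}\B_2u_k\B_2=K_m$;
\item twisting by $\tilde\chi^{-1}$ reduces to the trivial character of $\B_2$ inside $K_m$;
\item there $g\mapsto wg^{\mathsf T}w^{-1}$ works, since it preserves $K_m$ and fixes each $u_k$.
\end{itemize}

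\emph{Two smaller corrections.}
\begin{itemize}
\item In Step~1, the correct group is $H_{u_k}=\{h\in\B_2:\varpi^k(b-a-d\varpi^k)=0\}$. This agrees with your description only when $2k\geq\ell$, because Case~N1 used $\delta^2=0$. The ratio $\chi^{u_k}/\chi=\mu(1+da^{-1}\varpi^k)$ and your count of contributing cosets are unaffected.
\item The extension to $K_j$ must be $\tilde\chi=(\chi_2\circ\det)\cdot(\mu_j\circ\bar a)$, where $\bar a$ is the upper-left entry modulo $\varpi^j$. The formula $\chi_1(a)\chi_2(b)$ is not a character of $K_j$ when $c(\chi_1)$ or $c(\chi_2)$ exceeds $j$, e.g.\ when $\chi_1=\chi_2$ is primitive. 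Twisting to $\mathcal B(\mu,1)$ first, as the paper does, avoids this.
\end{itemize}
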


\begin{proof}
Put $\mu=\chi_1\chi_2^{-1}$. By the tensor identity, $\mathcal B(\chi_1,\chi_2)\cong(\chi_2\circ\det)\otimes\mathcal B(\mu,1)$. Since twisting by a one-dimensional character preserves irreducibility, multiplicities, and dimensions, it is enough to consider $\mathcal B(\mu,1)$. By \cite[Theorem~3.2]{premGL2},
\begin{equation}\label{eq:rank-two-end-dimension}
\dim\End_{\G_2}\mathcal B(\mu,1)=\ell-m+1.
\end{equation}

Suppose first that $m\geq1$. For $m\leq j\leq\ell$, the character $\mu$ is trivial on $U^j$ and hence factors through $\mathcal O_j^\times$; let $\mu_j$ denote the resulting character. Let $W_j$ be the inflation to $\G_2$ of $\Ind_{\B_2(\mathcal O_j)}^{\GL_2(\mathcal O_j)}(\mu_j\otimes1)$. Pullback under the reduction maps gives $\G_2$-equivariant inclusions $W_j\subset W_{j+1}$, with $W_\ell=\mathcal B(\mu,1)$, and $\dim W_j=q^{j-1}(q+1)$. Hence,
\begin{equation*}
0\subset W_m\subset W_{m+1}\subset\cdots\subset W_\ell
\end{equation*}
has exactly $\ell-m+1$ nonzero successive quotients.
Since $\G_2$ is finite and the representations are over $\mathbb C$, this filtration splits. Thus there are nonzero $\G_2$-submodules $Q_m,\ldots,Q_\ell$ such that $\mathcal B(\mu,1)\cong Q_m\oplus\cdots\oplus Q_\ell$, where $Q_m\cong W_m$ and $Q_j\cong W_j/W_{j-1}$ for $m<j\leq\ell$. Therefore
\begin{equation*}
\dim\End_{\G_2}\mathcal B(\mu,1)=\sum_{i,j=m}^{\ell}\dim\Hom_{\G_2}(Q_i,Q_j)\geq\ell-m+1.
\end{equation*}
By \eqref{eq:rank-two-end-dimension}, equality holds. Hence, $\dim\End_{\G_2}(Q_j)=1$ for every $j$ and $\Hom_{\G_2}(Q_i,Q_j)=0$ for $i\neq j$. Proposition~\ref{th2} therefore shows that every $Q_j$ is irreducible, while the off-diagonal vanishing shows that they are pairwise inequivalent. Thus $\mathcal B(\mu,1)$ is multiplicity-free with $\ell-m+1$ constituents. Their dimensions are $\dim Q_m=q^{m-1}(q+1)$ and, for $m<j\leq\ell$,
\begin{equation*}
\dim Q_j=\dim W_j-\dim W_{j-1}=q^{j-2}(q^2-1).
\end{equation*}

Suppose now that $m=0$. Then $\mu=1$. For $1\leq j\leq\ell$, let $W_j$ be the inflation to $\G_2$ of $\Ind_{\B_2(\mathcal O_j)}^{\GL_2(\mathcal O_j)}1$. Then $W_1\subset\cdots\subset W_\ell=\mathcal B(1,1)$ and $\dim W_j=q^{j-1}(q+1)$. Since the constant functions form a one-dimensional $\G_2$-subrepresentation of $W_1$, we obtain
\begin{equation*}
0\subset\mathbb C\mathbf1\subset W_1\subset W_2\subset\cdots\subset W_\ell.
\end{equation*}
This filtration has $\ell+1$ nonzero successive quotients, with dimensions $1$, $q$, and $q^{j-2}(q^2-1)$ for $2\leq j\leq\ell$. Since $\dim\End_{\G_2}\mathcal B(1,1)=\ell+1$ by \eqref{eq:rank-two-end-dimension}, the same argument shows that these quotients are irreducible and pairwise inequivalent. Finally, twisting by $\chi_2\circ\det$ preserves dimensions, irreducibility, and multiplicities, and the result follows.
\end{proof}

The preceding lemma gives the complete $n=2$ decomposition needed below. We now show that, under the two maximal relative conductor assumptions, the $\G_3$ intertwining problem reduces entirely to this Levi representation.

\subsection{Induction through the parabolic subgroup}\label{ssec:separated-decomposition}

We now turn to the decomposition of $\mathcal B(\chi_1,\chi_2,\chi_3)$ under the assumptions $c(\chi_1\chi_3^{-1})=c(\chi_2\chi_3^{-1})=\ell$. The main step is to show that all Mackey contributions from double cosets outside the standard parabolic subgroup of type $(2,1)$ vanish. This allows us to reduce the endomorphism-space calculation to the rank-two representation studied in Lemma~\ref{lem:rank-two-decomposition}. For this purpose, let
\begin{equation}\label{eq:parabolic-two-one}
P=\left\{\begin{pmatrix}A&v\\0&c\end{pmatrix}:A\in\G_2,\ v\in\mathcal O_\ell^2,\ c\in\mathcal O_\ell^\times\right\}.
\end{equation}
Write $P=L_PU_P$, where
\begin{equation*}
L_P=\left\{\begin{pmatrix}A&0\\0&c\end{pmatrix}:A\in\G_2,\ c\in\mathcal O_\ell^\times\right\}\simeq\G_2\times\mathcal O_\ell^\times
\end{equation*}
is the Levi subgroup and
\begin{equation*}
U_P=\left\{\begin{pmatrix}I_2&v\\0&1\end{pmatrix}:v\in\mathcal O_\ell^2\right\}
\end{equation*}
is its unipotent radical. For a representation $V$ of $\G_2$, we regard $V\otimes\chi_3$ as a representation of $P$ through the natural projection $P\to L_P$, with $U_P$ acting trivially. With this notation, we now establish Theorem~\ref{thm:separated-character-mf}, which we restate below for convenience.

\begin{theorem}\label{thm:separated-character-mf}
Let $\chi_1,\chi_2,\chi_3$ be characters of $\mathcal O_\ell^\times$ satisfying
\begin{equation}\label{eq:separated-character-hypothesis}
c(\chi_1\chi_3^{-1})=c(\chi_2\chi_3^{-1})=\ell.
\end{equation}
Put $m=c(\chi_1\chi_2^{-1})$ and $N=\ell-m+1$. By Lemma~\ref{lem:rank-two-decomposition}, write $\mathcal B(\chi_1,\chi_2)=V_1\oplus\cdots\oplus V_N$, where $V_1,\ldots,V_N$ are the pairwise inequivalent irreducible constituents of $\mathcal B(\chi_1,\chi_2)$. Then
\begin{equation}\label{eq:separated-parabolic-decomposition}
\mathcal B(\chi_1,\chi_2,\chi_3)\cong\bigoplus_{j=1}^{N}\Ind_P^{\G_3}(V_j\otimes\chi_3),
\end{equation}
and the representations $\Ind_P^{\G_3}(V_j\otimes\chi_3)$ are irreducible and pairwise inequivalent. In particular, $\mathcal B(\chi_1,\chi_2,\chi_3)$ is multiplicity-free with exactly $N$ irreducible constituents.
\end{theorem}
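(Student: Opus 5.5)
The decomposition~\eqref{eq:separated-parabolic-decomposition} comes from transitivity of induction, $\mathcal B(\chi_1,\chi_2,\chi_3)\cong\Ind_P^{\G_3}\Ind_{\B_3}^{P}\chi$, together with additivity of induction. Since $\B_3=\B_2(\text{embedded in }L_P)\cdot U_P$ and $U_P\subset\B_3$ is normal in $P$ with $\chi$ trivial on it, we have $\Ind_{\B_3}^{P}\chi\cong\mathcal B(\chi_1,\chi_2)\otimes\chi_3$, inflated from $L_P$. Lemma~\ref{lem:rank-two-decomposition} gives $\mathcal B(\chi_1,\chi_2)=V_1\oplus\cdots\oplus V_N$, and inducing summand by summand yields~\eqref{eq:separated-parabolic-decomposition}.

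For irreducibility and inequivalence I will not work summand by summand. It suffices to prove
\begin{equation*}
\dim\End_{\G_3}\mathcal B(\chi_1,\chi_2,\chi_3)=N,
\end{equation*}
and then argue as in the proof of Lemma~\ref{lem:rank-two-decomposition}. The endomorphism algebra is $\bigoplus_{i,j}\Hom_{\G_3}(\Ind_P^{\G_3}(V_i\otimes\chi_3),\Ind_P^{\G_3}(V_j\otimes\chi_3))$, and each diagonal term is at least $1$. If the total dimension equals $N$, every diagonal term is exactly $1$ and every off-diagonal term is $0$. Proposition~\ref{th2} then gives irreducibility, pairwise inequivalence, and multiplicity-freeness with $N$ constituents.

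To compute the dimension I use Lemma~\ref{cor1} with the Borel double cosets of Lemma~\ref{lem:covering}. A representative $g$ contributes exactly when $\chi|_{H_g}=\chi^g|_{H_g}$. I sort the cosets by their image in the Weyl group of $\GL_3(\mathbb F_q)$, i.e.\ by the permutation pattern of the representative modulo $\varpi$.
\begin{itemize}
\item[(i)] Cases 1, 2, 3 and 4 of the proof of Theorem~\ref{thm:irreducibility} (the families (f), (e), (d), (c)) were killed using only $t_{13}$ or $t_{23}$. Their non-contribution therefore holds verbatim under~\eqref{eq:separated-character-hypothesis}.
\item[(ii)] Case 6(a) used only $t_{13}$ and Case 6(c) only $t_{23}$. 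So in family (a) the only surviving representatives are those with $i<j$ and $i\le k$. The subcase $i<k$ is not killed by the $d$-element of Case 6(b), which needs $t_{12}$. Instead, since $k>i\ge1$, I will look for an element built from $f$ or $e$ that exhibits a nontrivial $\chi_1\chi_3^{-1}$ or $\chi_2\chi_3^{-1}$ value on $U^{\ell-1}$, unless $j=k=\ell$.
\item[(iii)] Thus the surviving cosets should be exactly those whose representatives lie in $P$. These are the $g=I_3+\varpi^iE_{21}$ with $1\le i\le\ell$, together with the family-(b) representatives with $\varpi^i=\varpi^j=0$, i.e.\ the Weyl element $s_1$; note $P=\B_3\{g\in L_P\}\B_3$.
\end{itemize}
The cosets contained in $P$ contribute exactly $\dim\End_P(\Ind_{\B_3}^P\chi)=\dim\End_{\G_2}\mathcal B(\chi_1,\chi_2)=N$, using Mackey for $P$ and Lemma~\ref{lem:rank-two-decomposition}. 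Hence the total is $N$.

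The main obstacle is step (ii). I must show that every Borel double coset not contained in $P$ fails the intertwining condition using only the characters $\chi_1\chi_3^{-1}$ and $\chi_2\chi_3^{-1}$. For family (a) with $i<j$, $i\le k$ and $(j,k)\ne(\ell,\ell)$, I would first try $X=I+fE_{13}$ with $xf\varpi^j=t$, or $X=I+eE_{23}$ with $e\varpi^k=t$, adjusting $X$ by a $d$-term to cancel the entries of~\eqref{eq:case-six-entries} that are not automatically zero. This uses that the $(1,2)$-block is itself a $\G_2$-coset, which is absorbed by $P$. In family (b), the entries in the third row force the bottom-left $2$-vector to be $0$ unless $(\varpi^i,\varpi^j)\ne0$; in that case the $\diag(1,1,1+t)$-type test elements from Cases 2 and 3 should work. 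If direct computation with the Borel representatives becomes unwieldy, the fallback is to run Mackey on $P\backslash\G_3/P$, which has representatives $I_3$, $I+\varpi^rE_{32}$ ($1\le r<\ell$) and $s_2$. For each non-identity representative I would show that $\Hom_{P\cap gPg^{-1}}$ vanishes, using a central element $\diag(1,1,1+t)$ or $\diag(1+t,1,1)$ that acts on $V_j\otimes\chi_3$ by the scalar $\chi_3(1+t)$, respectively $\chi_1(1+t)$ or $\chi_2(1+t)$.
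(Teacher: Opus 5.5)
\emph{Genuine gap: the vanishing of the contributions from the family (b) double cosets outside $P$.} Your diagonal test elements cannot prove it, and neither can your fallback.

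\emph{What is correct.} Your architecture matches the paper's: transitivity of induction, the count $\dim\End_{\G_3}\mathcal B(\chi_1,\chi_2,\chi_3)=N$ forcing the summands to be irreducible and pairwise inequivalent, and the identification of the double cosets inside $P$ with $\dim\End_{\G_2}\mathcal B(\chi_1,\chi_2)$. Item (i) is correct. Your plan for family (a) also works. For $j<\ell$, the element $X=I+f\varpi^kE_{12}+fE_{13}$ with $xf\varpi^j=t$ kills all three lower entries of \eqref{eq:case-six-entries}. For $j=\ell>k$, the element $I+eE_{23}$ does. This is exactly the paper's treatment of the identity Weyl cell.

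\emph{Why the diagonal elements fail for family (b).} Take $g=\begin{pmatrix}0&1&0\\1&0&0\\\alpha&\beta&1\end{pmatrix}$ with $0\neq(\alpha,\beta)\in(\varpi\mathcal O_\ell)^2$. Your claim that the diagonal test elements of Cases 2 and 3 work here is false. With $J=\varpi^{\ell-1}\mathcal O_\ell$ one has $\alpha J=\beta J=0$, and a direct computation gives
\[
g^{-1}\diag(1+t_1,1+t_2,1+t_3)\,g=\diag(1+t_2,1+t_1,1+t_3)\qquad(t_1,t_2,t_3\in J).
\]
So on diagonal elements of $1+J$ the ratio $\chi/\chi^g$ equals $(\chi_1\chi_2^{-1})(1+t_1)\,(\chi_1\chi_2^{-1})(1+t_2)^{-1}$. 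This is identically $1$ as soon as $m<\ell$, which is the only case not already settled by Theorem~\ref{thm:irreducibility}. Diagonal elements only detect $\chi_1\chi_2^{-1}$ on $1+J$, and the hypothesis says nothing about that character.

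\emph{The missing idea.} You need a non-diagonal test element whose conjugate moves $1+\varepsilon$ into the $(3,3)$ slot. If $\alpha\neq0$, choose $f$ with $f\alpha=\varepsilon$ (possible since $J\subseteq\alpha\mathcal O_\ell$) and put $X=\begin{pmatrix}1+\varepsilon&f\beta&f\\0&1&0\\0&0&1\end{pmatrix}$. Then $gXg^{-1}=\begin{pmatrix}1&0&0\\0&1&f\\0&0&1+\varepsilon\end{pmatrix}\in\B_3$, which compares $\chi_3(1+\varepsilon)$ with $\chi_1(1+\varepsilon)$. If $\alpha=0\neq\beta$, take $e\beta=\varepsilon$ and $X=\begin{pmatrix}1&0&0\\0&1+\varepsilon&e\\0&0&1\end{pmatrix}$. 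Then $gXg^{-1}=\begin{pmatrix}1&0&e\\0&1&0\\0&0&1+\varepsilon\end{pmatrix}$, which compares $\chi_3$ with $\chi_2$. This is how the paper handles the transposition $(12)$, after putting representatives in the normal form $g=wL$.

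\emph{Why the fallback does not repair this.} Your fallback runs Mackey over $P\backslash\G_3/P$.
\begin{itemize}
\item The element $\diag(1+t,1,1)$ is not central in $L_P\simeq\G_2\times\mathcal O_\ell^\times$. It does not act on $V_j\otimes\chi_3$ by a scalar.
\item For $g=I+\varpi^rE_{32}$, every diagonal element of $1+J$ commutes with $g$, so such elements impose no condition at all.
\item For $g=s_2$, the central element $\diag(1,1,1+t)$ is carried to $\diag(1,1+t,1)$, which acts on $V_i$ through the non-scalar operator $V_i(\diag(1,1+t))$. When $m<\ell$, the restriction of $\mathcal B(\chi_1,\chi_2)$ to the abelian kernel $\ker(\G_2\to\GL_2(\mathcal O_{\ell-1}))$ contains the full $\G_2$-orbit of the character attached to a non-scalar matrix with a single eigenvalue. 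The $(2,2)$-entries in that orbit exhaust $\mathbb F_q$. Hence every character of $1+J$, including $\chi_3|_{1+J}$, occurs as an eigencharacter of $\diag(1,1+t)$, and your single-element argument yields no contradiction.
\end{itemize}
At the parabolic level you would also have to use the unipotent elements of $P\cap gPg^{-1}$, for instance by passing to invariants or coinvariants of $V_i$ under the unipotent radical of $\B_2$. The proposal does not do this. The simplest repair is to stay at the Borel level with the test elements above.
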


\begin{proof}
Let $\chi$ be the character of $\B_3$ determined by $(\chi_1,\chi_2,\chi_3)$. For $g\in\G_3$, put $H_g=\B_3\cap g\B_3g^{-1}$ and $\chi^g(h)=\chi(g^{-1}hg)$ for $h\in H_g$. Since $\chi|_{H_g}$ and $\chi^g$ are one-dimensional characters, the double coset $\B_3g\B_3$ contributes to the self-intertwining space if and only if $\chi|_{H_g}=\chi^g$. We first prove
\begin{equation}\label{eq:outside-parabolic-vanishing}
\Hom_{H_g}\bigl(\chi|_{H_g},\chi^g\bigr)=0\qquad(g\notin P).
\end{equation}

\smallskip
\noindent\emph{The case $\ell=1$.}
Here $\mathcal O_1=\mathbb F_q$, and the Bruhat decomposition gives permutation matrices as representatives of the $\B_3$-double cosets. The Weyl subgroup corresponding to $P$ is $\{1,(12)\}$. Hence, a permutation matrix $w$ lies outside $P$ if and only if $r=w^{-1}(3)\neq3$, so $r\in\{1,2\}$. By \eqref{eq:separated-character-hypothesis}, $\chi_3\neq\chi_r$. Choose $u\in\mathbb F_q^\times$ such that $\chi_3(u)\neq\chi_r(u)$ and put $h=I_3+(u-1)E_{33}$. Then $h\in\B_3$ and $w^{-1}hw=I_3+(u-1)E_{rr}\in\B_3$, so $h\in H_w$. Moreover, $\chi(h)=\chi_3(u)$ and $\chi^w(h)=\chi_r(u)$, and hence $\chi(h)\neq\chi^w(h)$. This proves \eqref{eq:outside-parabolic-vanishing} for $\ell=1$.
Suppose henceforth that $\ell\geq2$, and put $J=\varpi^{\ell-1}\mathcal O_\ell$. Then $J\varpi\mathcal O_\ell=0$ and $J^2=0$. By \eqref{eq:separated-character-hypothesis}, both $\chi_3\chi_1^{-1}$ and $\chi_3\chi_2^{-1}$ are nontrivial on $1+J$.

We first choose convenient double-coset representatives. Let $g\in\G_3$, and let $\overline g$ denote its reduction modulo $\varpi$. By the Bruhat decomposition of $\overline{\G}_3=\GL_3(\mathbb F_q)$, there are $\overline b_1,\overline b_2\in\overline{\B}_3$ and a permutation matrix $w$ such that $\overline g=\overline b_1w\overline b_2$. Since $\B_3\to\overline{\B}_3$ is surjective, choose lifts $b_1,b_2\in\B_3$ and put $g_0=b_1^{-1}gb_2^{-1}$. Then $g_0$ lies in the same $\B_3$-double coset as $g$ and $g_0\equiv w\pmod{\varpi\mathcal O_\ell}$. Put $M=w^{-1}g_0$. Then $M\equiv I_3\pmod{\varpi\mathcal O_\ell}$. Since the first two pivots are congruent to $1$ modulo $\varpi$, they are units, and Gaussian elimination gives $M=LU$, where $U\in\B_3$ and
\begin{equation*}
L=\begin{pmatrix}1&0&0\\\alpha&1&0\\\beta&\gamma&1\end{pmatrix},\qquad \alpha,\beta,\gamma\in\varpi\mathcal O_\ell.
\end{equation*}
Hence, every $\B_3$-double coset contains a representative
\begin{equation}\label{eq:wL-normal-form}
g=w\begin{pmatrix}1&0&0\\\alpha&1&0\\\beta&\gamma&1\end{pmatrix},\qquad \alpha,\beta,\gamma\in\varpi\mathcal O_\ell.
\end{equation}

\smallskip
\noindent\emph{Permutations moving the third position.}
Suppose $r=w^{-1}(3)\neq3$. Then $r=1$ or $2$. Choose $\varepsilon\in J$ such that $\chi_3(1+\varepsilon)\neq\chi_r(1+\varepsilon)$ and put $h=I_3+\varepsilon E_{33}$. Since $g\equiv w\pmod{\varpi\mathcal O_\ell}$, we have $g^{-1}E_{33}g-E_{rr}\in\varpi M_3(\mathcal O_\ell)$. Multiplication by $\varepsilon\in J$ gives $\varepsilon(g^{-1}E_{33}g-E_{rr})=0$, and hence $g^{-1}hg=I_3+\varepsilon E_{rr}\in\B_3$. Thus $h\in H_g$, while $\chi(h)=\chi_3(1+\varepsilon)\neq\chi_r(1+\varepsilon)=\chi^g(h)$. Therefore, every such double coset has zero intertwining contribution.

It remains to consider the identity permutation and the transposition $(12)$.

\smallskip
\noindent\emph{The transposition $(12)$.}
After left multiplication by a suitable element of $\B_3$, a representative in \eqref{eq:wL-normal-form} may be written as
\begin{equation*}
g=\begin{pmatrix}0&1&0\\1&0&0\\\alpha&\beta&1\end{pmatrix},\qquad g^{-1}=\begin{pmatrix}0&1&0\\1&0&0\\-\beta&-\alpha&1\end{pmatrix},
\end{equation*}
where $\alpha,\beta\in\varpi\mathcal O_\ell$. Here $g\notin P$ if and only if $(\alpha,\beta)\neq(0,0)$.
We use the following elementary fact. If $0\neq z\in\varpi\mathcal O_\ell$, then $z\mathcal O_\ell=\varpi^s\mathcal O_\ell$ for some $1\leq s\leq\ell-1$. Hence, $J=\varpi^{\ell-1}\mathcal O_\ell\subseteq z\mathcal O_\ell$, so for every $\varepsilon\in J$ the equation $vz=\varepsilon$ has a solution $v\in\mathcal O_\ell$.

Suppose first that $\alpha\neq0$. Choose $\varepsilon\in J$ such that $(\chi_3\chi_1^{-1})(1+\varepsilon)\neq1$ and choose $f\in\mathcal O_\ell$ with $f\alpha=\varepsilon$. Put
\begin{equation*}
X=\begin{pmatrix}1+\varepsilon&f\beta&f\\0&1&0\\0&0&1\end{pmatrix}.
\end{equation*}
A direct multiplication gives
\begin{equation*}
gXg^{-1}=\begin{pmatrix}1&0&0\\0&1+\varepsilon-f\alpha&f\\0&\alpha(\varepsilon-f\alpha)&1+f\alpha\end{pmatrix}.
\end{equation*}
Since $f\alpha=\varepsilon$, we obtain
\begin{equation*}
h:=gXg^{-1}=\begin{pmatrix}1&0&0\\0&1&f\\0&0&1+\varepsilon\end{pmatrix}\in\B_3.
\end{equation*}
Thus $h\in H_g$ and $g^{-1}hg=X$. Hence, $\chi(h)=\chi_3(1+\varepsilon)$, whereas $\chi^g(h)=\chi(X)=\chi_1(1+\varepsilon)$, so $\chi(h)\neq\chi^g(h)$.

Suppose next that $\alpha=0$ and $\beta\neq0$. Choose $\varepsilon\in J$ such that $(\chi_3\chi_2^{-1})(1+\varepsilon)\neq1$ and choose $e\in\mathcal O_\ell$ with $e\beta=\varepsilon$. Put
\begin{equation*}
X=\begin{pmatrix}1&0&0\\0&1+\varepsilon&e\\0&0&1\end{pmatrix}.
\end{equation*}
Then
\begin{equation*}
gXg^{-1}=\begin{pmatrix}1+\varepsilon-e\beta&0&e\\0&1&0\\\beta(\varepsilon-e\beta)&0&1+e\beta\end{pmatrix},
\end{equation*}
and hence
\begin{equation*}
h:=gXg^{-1}=\begin{pmatrix}1&0&e\\0&1&0\\0&0&1+\varepsilon\end{pmatrix}\in\B_3.
\end{equation*}
Thus $h\in H_g$ and $g^{-1}hg=X$. Hence, $\chi(h)=\chi_3(1+\varepsilon)$, while $\chi^g(h)=\chi(X)=\chi_2(1+\varepsilon)$, so again $\chi(h)\neq\chi^g(h)$.

\smallskip
\noindent\emph{The identity permutation.}
Now take
\begin{equation*}
g=\begin{pmatrix}1&0&0\\\alpha&1&0\\\beta&\gamma&1\end{pmatrix},\qquad g^{-1}=\begin{pmatrix}1&0&0\\-\alpha&1&0\\\alpha\gamma-\beta&-\gamma&1\end{pmatrix},
\end{equation*}
where $\alpha,\beta,\gamma\in\varpi\mathcal O_\ell$. Here $g\notin P$ if and only if $(\beta,\gamma)\neq(0,0)$.

Suppose first that $\beta\neq0$. Choose $\varepsilon\in J$ such that $(\chi_3\chi_1^{-1})(1+\varepsilon)\neq1$ and choose $f\in\mathcal O_\ell$ with $f\beta=\varepsilon$. Take
\begin{equation*}
X=\begin{pmatrix}1&f\gamma&f\\0&1&0\\0&0&1\end{pmatrix}.
\end{equation*}
A direct multiplication gives
\begin{equation*}
gXg^{-1}=\begin{pmatrix}1-f\beta&0&f\\-\alpha f\beta&1&\alpha f\\-\beta^2f&0&1+f\beta\end{pmatrix}.
\end{equation*}
Since $f\beta=\varepsilon$ and $\alpha,\beta\in\varpi\mathcal O_\ell$, we have $\alpha\varepsilon=\beta\varepsilon=0$. Therefore
\begin{equation*}
h:=gXg^{-1}=\begin{pmatrix}1-\varepsilon&0&f\\0&1&\alpha f\\0&0&1+\varepsilon\end{pmatrix}\in\B_3.
\end{equation*}
Thus $h\in H_g$ and $g^{-1}hg=X$. Since $X$ is unipotent, $\chi^g(h)=1$. Moreover, $\varepsilon^2=0$, so $1-\varepsilon=(1+\varepsilon)^{-1}$, and hence $\chi(h)=(\chi_3\chi_1^{-1})(1+\varepsilon)\neq1$. Thus $\chi(h)\neq\chi^g(h)$.

Finally, suppose that $\beta=0$ and $\gamma\neq0$. Choose $\varepsilon\in J$ such that $(\chi_3\chi_2^{-1})(1+\varepsilon)\neq1$ and choose $e\in\mathcal O_\ell$ with $e\gamma=\varepsilon$. Take
\begin{equation*}
X=\begin{pmatrix}1&0&0\\0&1&e\\0&0&1\end{pmatrix}.
\end{equation*}
A direct multiplication gives
\begin{equation*}
gXg^{-1}=\begin{pmatrix}1&0&0\\\alpha e\gamma&1-e\gamma&e\\\alpha e\gamma^2&-e\gamma^2&1+e\gamma\end{pmatrix}.
\end{equation*}
Since $e\gamma=\varepsilon$ and $\alpha,\gamma\in\varpi\mathcal O_\ell$, we have $\alpha\varepsilon=\gamma\varepsilon=0$. Hence, we have
\begin{equation*}
h:=gXg^{-1}=\begin{pmatrix}1&0&0\\0&1-\varepsilon&e\\0&0&1+\varepsilon\end{pmatrix}\in\B_3.
\end{equation*}
Thus $h\in H_g$ and $g^{-1}hg=X$. Again $\chi^g(h)=1$, whereas $\chi(h)=(\chi_3\chi_2^{-1})(1+\varepsilon)\neq1$. Hence, we have $\chi(h)\neq\chi^g(h)$.
The above cases exhaust the representatives in \eqref{eq:wL-normal-form}. Therefore, \eqref{eq:outside-parabolic-vanishing} holds.

\smallskip
\noindent\emph{Reduction to the Levi subgroup.}
Since $\B_3\subset P$, a $\B_3$-double coset which meets $P$ is contained in $P$. Hence, the double cosets which can contribute are precisely those in $\B_3\backslash P/\B_3$. Comparing the Mackey decompositions in $\G_3$ and in $P$ gives
\begin{equation}\label{eq:separated-end-dimension-first}
\dim\End_{\G_3}\mathcal B(\chi_1,\chi_2,\chi_3)=\dim\End_P\bigl(\Ind_{\B_3}^{P}\chi\bigr).
\end{equation}

Since $P=L_PU_P$, $U_P\subset\B_3$, and $\chi$ is trivial on $U_P$, the representation $\Ind_{\B_3}^{P}\chi$ is obtained from $\mathcal B(\chi_1,\chi_2)\otimes\chi_3$ on $L_P\simeq\G_2\times\mathcal O_\ell^\times$ by letting $U_P$ act trivially. Consequently,

\begin{equation}\label{eq:separated-end-dimension}
\begin{aligned}
\dim\End_{\G_3}\mathcal B(\chi_1,\chi_2,\chi_3)
&=\dim\End_{\G_2\times\mathcal O_\ell^\times}\bigl(\mathcal B(\chi_1,\chi_2)\otimes\chi_3\bigr)\\
&=\dim\End_{\G_2}\mathcal B(\chi_1,\chi_2)\\
&=N.
\end{aligned}
\end{equation}

The second equality holds because the $\mathcal O_\ell^\times$-factor acts through the scalar character $\chi_3$, and the last equality follows from Lemma~\ref{lem:rank-two-decomposition}.
By induction in stages, $\mathcal B(\chi_1,\chi_2,\chi_3)\cong\Ind_P^{\G_3}(\Ind_{\B_3}^{P}\chi)$. Using $\mathcal B(\chi_1,\chi_2)=\bigoplus_{j=1}^{N}V_j$ and additivity of induction gives \eqref{eq:separated-parabolic-decomposition}. Put $W_j=\Ind_P^{\G_3}(V_j\otimes\chi_3)$. Each $W_j$ is nonzero, since $\dim W_j=[\G_3:P]\dim V_j>0$, and $\mathcal B(\chi_1,\chi_2,\chi_3)\cong\bigoplus_{j=1}^{N}W_j$. Therefore
\begin{equation*}
\dim\End_{\G_3}\mathcal B(\chi_1,\chi_2,\chi_3)=\sum_{i,j=1}^{N}\dim\Hom_{\G_3}(W_i,W_j).
\end{equation*}
Each diagonal term has dimension at least one, so the right-hand side is at least $N$. By \eqref{eq:separated-end-dimension}, it is exactly $N$. Hence, $\dim\End_{\G_3}(W_j)=1$ for every $j$ and $\Hom_{\G_3}(W_i,W_j)=0$ for $i\neq j$. Proposition~\ref{th2} shows that every $W_j$ is irreducible, while the off-diagonal vanishing shows that the $W_j$ are pairwise inequivalent. This proves the theorem.
\end{proof}

The theorem reduces the decomposition of $\mathcal B(\chi_1,\chi_2,\chi_3)$ to the $n=2$ Levi representation. We now compute the dimensions of the induced constituents and then specialize to the case $\chi_1=\chi_2$.

\subsection{Constituent dimensions and two equal characters}\label{ssec:constituent-dimensions}

\begin{corollary}\label{cor:separated-constituent-dimensions}
Under the hypotheses of Theorem~\ref{thm:separated-character-mf}, put $D_\ell=[\G_3:P]=q^{2(\ell-1)}(q^2+q+1)$. If $m\geq1$, the constituent dimensions of $\mathcal B(\chi_1,\chi_2,\chi_3)$ are $D_\ell q^{m-1}(q+1)$ and $D_\ell q^{j-2}(q^2-1)$ for $m<j\leq\ell$. If $m=0$, equivalently $\chi_1=\chi_2$, they are $D_\ell$, $qD_\ell$, and $D_\ell q^{j-2}(q^2-1)$ for $2\leq j\leq\ell$. Each constituent occurs with multiplicity one.
\end{corollary}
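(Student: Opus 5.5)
The plan is to read off the dimensions directly from Theorem~\ref{thm:separated-character-mf} and Lemma~\ref{lem:rank-two-decomposition}. By Theorem~\ref{thm:separated-character-mf}, the irreducible constituents of $\mathcal B(\chi_1,\chi_2,\chi_3)$ are exactly the representations $W_j=\Ind_P^{\G_3}(V_j\otimes\chi_3)$, $1\leq j\leq N$, and they are pairwise inequivalent. Each therefore occurs with multiplicity one. Since induction multiplies dimension by the index, $\dim W_j=[\G_3:P]\dim V_j$, and $\dim(V_j\otimes\chi_3)=\dim V_j$. So it suffices to compute $D_\ell=[\G_3:P]$ and to substitute the dimensions of the $V_j$ listed in Lemma~\ref{lem:rank-two-decomposition}.

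For the index, I would use the order formulas recorded before Theorem~\ref{thm:irreducibility} and compute $[\G_3:P]=[\G_3:\B_3]/[P:\B_3]$. Because $U_P\subset\B_3$ and $P=L_PU_P$, we have $[P:\B_3]=[\G_2\times\mathcal O_\ell^\times:\B_2\times\mathcal O_\ell^\times]=[\G_2:\B_2]$. This equals $q^{\ell-1}(q+1)$, which is the $j=\ell$ case of $\dim W_\ell=q^{\ell-1}(q+1)$ in the proof of Lemma~\ref{lem:rank-two-decomposition}. Dividing $q^{3(\ell-1)}(q+1)(q^2+q+1)$ by $q^{\ell-1}(q+1)$ gives $D_\ell=q^{2(\ell-1)}(q^2+q+1)$. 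As an alternative check, $\G_3/P$ is the set of free rank-one direct summands (lines) in $\mathcal O_\ell^3$, whose number is $q^{2(\ell-1)}$ times the number of points of $\mathbb P^2(\mathbb F_q)$.

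Finally, I would multiply $D_\ell$ by the dimensions from Lemma~\ref{lem:rank-two-decomposition}. For $m\geq1$ this gives $D_\ell q^{m-1}(q+1)$ and $D_\ell q^{j-2}(q^2-1)$ for $m<j\leq\ell$. For $m=0$ it gives $D_\ell$, $qD_\ell$, and $D_\ell q^{j-2}(q^2-1)$ for $2\leq j\leq\ell$. The equivalence $m=0\iff\chi_1=\chi_2$ follows from Definition~\ref{cond}, since a character of conductor $0$ is trivial on $U^0=\mathcal O_\ell^\times$. As a sanity check, the dimensions should sum to $\dim\mathcal B(\chi_1,\chi_2,\chi_3)$: the $V_j$ telescope to $q^{\ell-1}(q+1)$, and multiplying by $D_\ell$ recovers $q^{3(\ell-1)}(q+1)(q^2+q+1)$.

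There is no real obstacle here. The only step that needs care is the index computation, in particular checking that $[P:\B_3]=[\G_2:\B_2]$ through the Levi decomposition.
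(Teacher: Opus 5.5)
Your proposal is correct and is essentially the paper's proof: Theorem~\ref{thm:separated-character-mf} supplies the pairwise inequivalent constituents $\Ind_P^{\G_3}(V_j\otimes\chi_3)$, each of dimension $[\G_3:P]\dim V_j$, and the values of $\dim V_j$ come from Lemma~\ref{lem:rank-two-decomposition}. The only difference is how you get $D_\ell$: you compute it as $[\G_3:\B_3]/[\G_2:\B_2]$ via the Levi decomposition, while the paper reduces modulo $\varpi$, using kernel orders $q^{9(\ell-1)}$ and $q^{7(\ell-1)}$ and the fact that $\overline P$ stabilizes $\langle e_1,e_2\rangle$ with index $q^2+q+1$. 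That computation also shows $\G_3/P$ is naturally the set of free rank-two summands of $\mathcal O_\ell^3$ rather than lines, though the two counts agree by duality, so your alternative check still gives the right number.
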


\begin{proof}
Let $\overline P$ be the image of $P$ in $\overline{\G}_3=\GL_3(\mathbb F_q)$. It is the stabilizer of $\langle e_1,e_2\rangle$, so $[\overline{\G}_3:\overline P]=q^2+q+1$. The kernels of the reduction maps $\G_3\to\overline{\G}_3$ and $P\to\overline P$ have orders $q^{9(\ell-1)}$ and $q^{7(\ell-1)}$, respectively. Hence, we have
\begin{equation*}
[\G_3:P]=q^{2(\ell-1)}(q^2+q+1)=D_\ell.
\end{equation*}
By Theorem~\ref{thm:separated-character-mf}, the constituents are $\Ind_P^{\G_3}(V_j\otimes\chi_3)$, and induction from $P$ multiplies dimensions by $D_\ell$. The stated dimensions therefore follow from Lemma~\ref{lem:rank-two-decomposition}. When $m=0$, Definition~\ref{cond} gives $c(\chi_1\chi_2^{-1})=0$ if and only if $\chi_1\chi_2^{-1}=1$, equivalently $\chi_1=\chi_2$. Multiplicity one follows from Theorem~\ref{thm:separated-character-mf}.
\end{proof}

\begin{corollary}\label{cor:two-equal-separated-characters}
Let $\chi,\eta$ be characters of $\mathcal O_\ell^\times$ satisfying $c(\chi\eta^{-1})=\ell$. Then $\mathcal B(\chi,\chi,\eta)$ is reducible and multiplicity-free, with exactly $\ell+1$ irreducible constituents. In particular, $(\G_3,\B_3,\chi)$ is a Gelfand triple, where
\begin{equation*}
\chi\begin{pmatrix}a&d&f\\0&b&e\\0&0&c\end{pmatrix}=\chi(a)\chi(b)\eta(c).
\end{equation*}
The constituent dimensions are $D_\ell$, $qD_\ell$, and $D_\ell q^{j-2}(q^2-1)$ for $2\leq j\leq\ell$, where $D_\ell=q^{2(\ell-1)}(q^2+q+1)$.
\end{corollary}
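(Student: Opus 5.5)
This is a direct specialization of Theorem~\ref{thm:separated-character-mf} and Corollary~\ref{cor:separated-constituent-dimensions} to $(\chi_1,\chi_2,\chi_3)=(\chi,\chi,\eta)$. The plan is to check the hypotheses, read off the count and the dimensions, and then deduce reducibility from the count.

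\textbf{Hypotheses and count.} The hypothesis \eqref{eq:separated-character-hypothesis} reads $c(\chi_1\chi_3^{-1})=c(\chi_2\chi_3^{-1})=c(\chi\eta^{-1})=\ell$, so it holds by assumption. Since $\chi_1\chi_2^{-1}=1$ and Definition~\ref{cond} gives $c(1)=0$, we have $m=0$ and $N=\ell+1$. Theorem~\ref{thm:separated-character-mf} then shows that $\mathcal B(\chi,\chi,\eta)$ is multiplicity-free with exactly $\ell+1$ pairwise inequivalent irreducible constituents. Hence $\Ind_{\B_3}^{\G_3}$ of this character is multiplicity-free, which is precisely the Gelfand triple property as defined before Proposition~\ref{th2}.

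\textbf{Reducibility.} Since $\ell\ge1$, we have $\ell+1\ge2$, so there are at least two constituents. Equivalently, $\dim\End_{\G_3}=\ell+1\ge2$, and Proposition~\ref{th2} gives reducibility. One could instead cite the necessity part of Theorem~\ref{thm:irreducibility}, because $\chi_1=\chi_2$ agree on $1+\varpi^{\ell-1}\mathcal O_\ell$. However, that theorem requires $\ell\ge2$, so the counting argument is the cleaner route, as it covers $\ell=1$ as well.

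\textbf{Dimensions.} These come from the $m=0$ case of Corollary~\ref{cor:separated-constituent-dimensions}: $D_\ell$, $qD_\ell$, and $D_\ell q^{j-2}(q^2-1)$ for $2\le j\le\ell$. As a sanity check, their sum is $D_\ell\bigl(1+q+(q^2-1)(1+q+\cdots+q^{\ell-2})\bigr)=D_\ell q^{\ell-1}(q+1)$. This equals $q^{3(\ell-1)}(q+1)(q^2+q+1)=\dim\mathcal B(\chi_1,\chi_2,\chi_3)$, as computed before Theorem~\ref{thm:irreducibility}.

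There is no real obstacle here. The only point needing care is the edge case $\ell=1$, where the hypothesis $c(\chi\eta^{-1})=1$ just means $\chi\ne\eta$. Theorem~\ref{thm:separated-character-mf} explicitly handles $\ell=1$, so the argument goes through unchanged, giving two constituents of dimensions $D_1$ and $qD_1$.
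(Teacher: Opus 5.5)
Your proposal is correct and follows the paper's proof essentially verbatim: you specialize Theorem~\ref{thm:separated-character-mf} to $(\chi_1,\chi_2,\chi_3)=(\chi,\chi,\eta)$ so that $m=0$ and $N=\ell+1$, deduce reducibility from $\ell+1\geq 2$, and read off the dimensions from the $m=0$ case of Corollary~\ref{cor:separated-constituent-dimensions}. Your dimension-sum check against $q^{3(\ell-1)}(q+1)(q^2+q+1)$ is correct, and so is your remark that the counting argument, unlike Theorem~\ref{thm:irreducibility}, also covers $\ell=1$.
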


\begin{proof}
Take $\chi_1=\chi_2=\chi$ and $\chi_3=\eta$ in Theorem~\ref{thm:separated-character-mf}. Then $c(\chi_1\chi_3^{-1})=c(\chi_2\chi_3^{-1})=\ell$ and $m=c(\chi_1\chi_2^{-1})=0$, so $N=\ell+1$. Hence, $\mathcal B(\chi,\chi,\eta)$ is multiplicity-free with $\ell+1$ irreducible constituents. Since $\ell\geq1$, it has at least two constituents and is therefore reducible. The Gelfand-triple assertion follows from multiplicity-freeness, and the dimensions follow from Corollary~\ref{cor:separated-constituent-dimensions}.
\end{proof}

\subsection*{Acknowledgments} The author thanks Hariom Sharma for carefully reading the article and providing valuable comments. AI tools were used solely for language editing and improving the clarity of the manuscript.

\bibliographystyle{amsplain}
\bibliography{psr}
\end{document}